\def\th@exercise{%
  \normalfont 
  \thm@headpunct{:}%
}
\title{Newtheorem and theoremstyle test}
\author{Michael Downes\\updated by Barbara Beeton}
\newtheorem{thm}{Theorem}[section]
\newtheorem{lem}[thm]{Lemma}
\theoremstyle{remark}
\newtheorem*{rmk}{Remark}
\theoremstyle{plain}
\newtheorem{Def}{Definition}
\newtheoremstyle{note}
  {3pt}
  {3pt}
  {}
  {}
  {\itshape}
  {:}
  {.5em}
  {}
\theoremstyle{note}
\newtheoremstyle{citing}
  {3pt}
  {3pt}
  {\itshape}
  {}
  {\bfseries}
  {.}
  {.5em}
  {\thmnote{#3}}
\theoremstyle{citing}
\newtheoremstyle{break}
  {9pt}
  {9pt}
  {\itshape}
  {}
  {\bfseries}
  {.}
  {\newline}
  {}
\theoremstyle{break}
\theoremstyle{exercise}
\theoremstyle{plain}
\let\lvert=|\let\rvert=|
\begin{document}

\title{$N$ -Laplacian and $N/2$-Hessian type equations with exponential
reaction term and measure data}

\author{Shiguang Ma and Zijian Wang}
\maketitle
\begin{abstract}
In this article, we will prove existence results for the equations
of the type $-\Delta_{N}u=H_{l}(u)+\mu$ and $F_{\frac{N}{2}}[-u]=H_{l}(u)+\mu$
in a bounded domain $\Omega$, with Dirichlet boundary condition,
where the source term $H_{l}(r)$ takes the form $e^{r}-\sum_{j=0}^{l-1}\frac{r^{j}}{j!}$
and $\mu$ is a nonnegative Radon measure.
\end{abstract}

\section{Introduction}

The following two types of equations appear in areas as quasi-regular
mappings, non-Newtonian fluids and reaction-diffusion problems, etc, 

\begin{align}
-\Delta_{p}u & =F(u,x),\label{p-Laplacian-F}\\
F_{k}[-u] & =F(u,x),\label{k-hessian-F}
\end{align}
where $\Delta_{p}u={\rm div}(|\nabla u|^{p-2}\nabla u)$ is the $p$-Laplacian
($p>1$), and $F_{k}[-u]$ is the $k$-Hessian $(k=1,2,\cdots,n)$
defined by 
\[
F_{k}[-u]=\sum_{1\le i_{1}<\cdots<i_{k}\le n}\lambda_{i_{1}}\cdots\lambda_{i_{k}},
\]
where $\lambda_{1},\cdots,\lambda_{n}$ are the eigenvalues of the
Hessian matrix $-D^{2}u$. 

For $p$-Laplacian equation, we may refer to \cite{Hein+Kilp+Mart,Iwaniec-Martin,Kilp and Maly Acta,Mazya1,Maly and Ziemer,Serrin1,Serrin2,Serrin-Zou acta,Trudinger-Wang4}
for the existence and regularity theory, estimates for supersolutions
and Wiener criterion, ect.

For $k$-Hessian equation, one may refer to \cite{Caffarelli-Nirenberg-Spruck,Gilberg-Trudinger,Guan1,Ivochkina,Krylov,Trudinger,Trudinger-Wang1,Trudinger-Wang2,Trudinger-Wang3,Trudinger-Wang4,Urbas,Labutin}
for related knowledge.

Here we are interested in quasilinear and fully nonlinear equations
(\ref{p-Laplacian-F}) and (\ref{k-hessian-F}) and the corresponding
inequalities
\begin{align*}
-\Delta_{p}u & \ge F(u,x),\,\,{\rm and}\,\,F_{k}[-u]\ge F(u,x),\,\,u\ge0\,\,\in\Omega.
\end{align*}
The inequalities can also be written as inhomogeneous equations with
measure data,
\begin{equation}
-\Delta_{p}u=F(u,x)+\mu,\,\,F_{k}[-u]=F(u,x)+\mu,\,\,u\ge0\,\,{\rm in}\,\,\Omega,\label{equation with measure data}
\end{equation}
where $\mu$ is a nonnegative Borel measure on $\Omega$. 

When $F(u,x)=u^{q}$, a major breakthrough on the existence of solutions
to (\ref{equation with measure data}) is made by \cite{Phuc-Verbitsky1}.
The difficulties in studies of such equations and inequalities lie
in the competing nonlinearities. The argument of \cite{Phuc-Verbitsky1}
relies largely on nonlinear potential theory \cite{Kilp and Maly Acta,Labutin}.
Several necessary and sufficient conditions are given in \cite{Phuc-Verbitsky1}.
For example, let's revise one of the theorems in \cite{Phuc-Verbitsky1}.
First, for $s>1,0\le\alpha<\frac{N}{s}$ and $0<T\le\infty$, the
$T$-truncated Wolff potential of a nonnegative Radon measure $\mu$
is defined by 
\[
W_{\alpha,s}^{T}[\mu](x)=\int_{0}^{T}(\frac{\mu(B_{t}(x))}{t^{N-\alpha s}})^{\frac{1}{s-1}}\frac{dt}{t}.
\]

\begin{thm}\label{Thm 2.10 of PV1} (Theorem 2.10 of \cite{Phuc-Verbitsky1})

The following two items are equivalent:
\begin{enumerate}
\item there exists a nonnegative renormalized solution $u\in L^{q}(\Omega)$
\[
\begin{cases}
-\Delta_{p}u=u^{q}+\varepsilon\omega, & {\rm in}\,\,\Omega,\\
u=0 & {\rm on}\,\,\partial\Omega
\end{cases}
\]
for some $\varepsilon>0$;
\item For all compact sets $E\subset\Omega,$
\begin{equation}
\omega(E)\le CCap_{G_{p},\frac{q}{q-p+1}}(E);\label{second equivalent conditon}
\end{equation}
\item The testing inequality 
\[
\int_{B}[W_{1,p}^{2R}\omega_{B}(x)]^{q}\le C\omega(B)
\]
holds for all balls $B$ such that $B\cap{\rm supp}\omega\neq\emptyset;$
\item There exists a constant $C$ such that 
\[
W_{1,p}^{2R}(W_{1,p}^{2R}\omega)^{q}(x)\le CW_{1,p}^{2R}\omega(x),a.e.on\,\Omega.
\]
\end{enumerate}
\end{thm}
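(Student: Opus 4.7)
The plan is to establish the cycle $(1) \Rightarrow (2) \Rightarrow (3) \Rightarrow (4) \Rightarrow (1)$, with the implications among $(2)$, $(3)$, $(4)$ being purely potential-theoretic and the PDE entering only in $(1) \Rightarrow (2)$ and $(4) \Rightarrow (1)$. The central tool on the PDE side is the Kilpel\"ainen--Mal\'y pointwise estimate for nonnegative renormalized solutions of $-\Delta_p u = \nu$, namely $c\, W_{1,p}^R \nu(x) \le u(x) \le C\, W_{1,p}^{2R} \nu(x) + C\inf_{B_R} u$; this is exactly what lets a purely analytic condition on $\omega$ characterize solvability.

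For $(1) \Rightarrow (2)$, I would apply the lower Kilpel\"ainen--Mal\'y bound to $\nu = u^q + \varepsilon\omega$ to get $u \ge c\varepsilon\, W_{1,p}^R\omega$ pointwise, so the assumption $u \in L^q(\Omega)$ forces $W_{1,p}^R\omega \in L^q(\Omega)$. The capacity inequality against $Cap_{G_p,q/(q-p+1)}$ then follows from the classical Maz'ya--Verbitsky duality between $L^q$ integrability of Wolff potentials and Bessel capacities.

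The equivalences $(2) \Leftrightarrow (3) \Leftrightarrow (4)$ belong to the theory of weighted norm inequalities for nonlinear potentials. The step $(2) \Leftrightarrow (3)$ is a dyadic testing characterization of the trace inequality for the operator $\mu \mapsto W_{1,p}^{2R}\mu$: single-ball test measures detect the capacity condition after a Muckenhoupt-type dyadic reduction. The direction $(4) \Rightarrow (3)$ is immediate by integration, while $(3) \Rightarrow (4)$ is the crux, upgrading a weak-type bound on balls to a strong pointwise estimate through a nonlinear Wolff iteration combined with a careful near/far decomposition of $\omega$ around each point.

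Finally, $(4) \Rightarrow (1)$ is constructive: the pointwise bound $W_{1,p}^{2R}(W_{1,p}^{2R}\omega)^q \le C\, W_{1,p}^{2R}\omega$ says precisely that $v := A\, W_{1,p}^{2R}\omega$ majorizes the Wolff potential of $v^q + \varepsilon\omega$ for a suitably small $\varepsilon$ and a fixed constant $A$, and by the upper Kilpel\"ainen--Mal\'y estimate $v$ is an $L^q$-supersolution; a monotone iteration $u_0 = 0$, $-\Delta_p u_{n+1} = u_n^q + \varepsilon\omega$ stays below $v$ and converges to the desired renormalized solution. The main obstacle is the self-improvement step $(3) \Rightarrow (4)$: translating a testing inequality on balls into a pointwise bound on a doubly iterated Wolff potential requires sharp dyadic estimates and the nonlinear Wolff iteration that constitutes the technical heart of \cite{Phuc-Verbitsky1}.
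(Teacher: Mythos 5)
The theorem is quoted by the paper directly from Phuc--Verbitsky with no proof of its own, so there is no internal argument to compare your sketch against---only the cited source. Your overall organization---treating (2), (3), (4) as a purely potential-theoretic chain of equivalences and attaching (1) through the two-sided Kilpel\"ainen--Mal\'y Wolff-potential estimates, with a monotone iteration for $(4)\Rightarrow(1)$---does match the architecture of the Phuc--Verbitsky argument, and you correctly flag $(3)\Rightarrow(4)$ as the technical crux.

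However, your $(1)\Rightarrow(2)$ step is too thin to stand as written. From the lower Wolff bound you do obtain $W_{1,p}^R\omega\in L^q(\Omega)$, but that is a single global quantity, whereas the capacity condition (2) is equivalent to the scale-invariant, \emph{local} ball-testing inequality (3): the global $L^q$ bound is precisely the instance of (3) for the one ball covering $\Omega$, and the small-ball instances carry strictly more information near concentrations of $\omega$. The ``Maz'ya--Verbitsky duality'' you invoke converts (3) into (2), not the global $L^q$ bound into (2). What actually closes the loop in Phuc--Verbitsky is the self-improving fixed-point structure of the solution: $u\gtrsim W_{1,p}^R[u^q\,dx+\varepsilon\,d\omega]$ means the measure $\nu:=u^q\,dx+\varepsilon\,d\omega$ has $W_{1,p}^R\nu\lesssim u$, hence $\int_\Omega (W_{1,p}^R\nu)^q\,dx\lesssim\nu(\Omega)$, and then a localization argument on dyadic cubes (using quasi-additivity of Wolff potentials) upgrades this global estimate to the ball-wise testing inequality (3) for $\nu$, and hence for $\omega\le\varepsilon^{-1}\nu$. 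Without some such localization step your chain $(1)\Rightarrow(2)$ does not close.
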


The equivalence conditions may be understood in the following way:
the measure can only concentrate in a relatively mild way (Condition
2) such that $u^{q}$ (or equivalently $(W_{1,p}^{2R}\omega)^{q}$)
plays a minor role compared with $\varepsilon\omega$(Condition 3
and $4$). 

For generalizations, it is natural to consider the case when $F(u,x)$
is of exponential type. In \cite{Nguyen-Veron}, the authors considered
the case when $F(u,x)=H_{l}(\alpha u^{\beta}),\alpha>0,\beta\ge1$,
where 
\begin{equation}
H_{l}=e^{r}-\sum_{j=0}^{l-1}\frac{r^{j}}{j!}.\label{Hl}
\end{equation}
 Also we notice that, they consider $p$-Laplacian case $1<p<N$ and
$k$-Hessian case $1<k<\frac{N}{2}$. To get the existence of the
solutions, the condition of the type 
\begin{equation}
\|M_{p,2{\rm diam}(\Omega)}^{\frac{(p-1)(\beta-1)}{\beta}}[\mu]\|_{L^{\infty}(\mathbb{R}^{N})}\le M\label{NV type condition}
\end{equation}
 is imposed, where 
\[
M_{\alpha,T}^{\eta}[\mu](x)=\sup\{\frac{\mu(B_{t}(x))}{t^{N-\alpha}h_{\eta}(t)}:0<t\le T\},
\]
where 
\[
h_{\eta}(t)=(-\ln t)^{-\eta}\chi_{(0,2^{-1}]}(t)+(\ln2)^{-\eta}\chi_{[2^{-1},\infty)}(t).
\]

More general results are given in \cite{Nguyen1}. For example, the
parabolic equation is considered with the right hand side containing
gradient terms. Also (\ref{NV type condition}) type conditions are
required. 

However, up to our knowledge, the following type equations are not
studied
\[
-\Delta_{N}u=H_{l}(u)+\mu,\,\,F_{\frac{N}{2}}[-u]=H_{l}(u)+\mu,\,\,x\in\Omega,
\]
where $\Omega$ is a bounded domain and $\mu$ is a nonnegative Radon
measure and $H_{l}(u)$ is given by (\ref{Hl}). The operators $\Delta_{N}$
and $F_{\frac{N}{2}}$ are both borderline operators, which have fundamental
solutions of logarithm type. Our main results are list below.

\begin{thm}\label{Main Thm p-Laplace}

Let $\Omega\subset\mathbb{R}^{N}$ be a bounded domain, $l\in\mathbb{N}$
and $l\ge N$ and $\mu$ is a nonnegative Radon measure supported
in $\Omega$. Then there exists $M>0$ depending only on $N,l$ such
that if
\[
\mu(\Omega)\le M,
\]
then the following Dirichlet problem
\begin{equation}
\begin{cases}
-\Delta_{N}u & =H_{l}(u)+\mu,\,\,{\rm in}\,\,\Omega,\\
u & =0,\,\,{\rm on}\,\,\partial\Omega,
\end{cases}\label{N-Laplace u equation}
\end{equation}
admits a nonnegative renormalized solution $u$ which satisfies
\[
u(x)\le C(N,p)W_{1,N}^{2{\rm diam}(\Omega)}[\bar{\mu}](x),\forall x\in\Omega.
\]

\end{thm}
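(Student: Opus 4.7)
The plan is to run a Schauder-type fixed point argument in a carefully chosen subset of $L^{1}(\Omega)$ defined via the Wolff potential. For a nonnegative measurable function $v$ on $\Omega$, extend by zero and let $T(v)$ denote the nonnegative renormalized solution of $-\Delta_{N}w=H_{l}(v)+\mu$ with zero boundary data. By the Kilpel\"ainen--Mal\'y pointwise estimate, any such renormalized solution satisfies
\[
0\le T(v)(x)\le c_{0}\,W_{1,N}^{2\mathrm{diam}(\Omega)}\!\bigl[\,\overline{H_{l}(v)\,dx+\mu}\,\bigr](x)
\]
for $x\in\Omega$, where $c_{0}=c_{0}(N)$. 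I would define the candidate invariant set
\[
\mathcal{K}=\Bigl\{v\in L^{1}(\Omega):\ 0\le v(x)\le R\,W_{1,N}^{2\mathrm{diam}(\Omega)}[\bar{\mu}](x)\ \text{a.e.\ in }\Omega\Bigr\}
\]
with a constant $R=R(N,l)$ to be fixed, and try to show $T(\mathcal{K})\subset\mathcal{K}$.

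The crux is to prove that when $\mu(\Omega)\le M$ is small, one has a pointwise bound
\[
W_{1,N}^{2\mathrm{diam}(\Omega)}\!\bigl[H_{l}(v)\bigr](x)\le \tfrac{R-c_{0}}{c_{0}}\,W_{1,N}^{2\mathrm{diam}(\Omega)}[\bar{\mu}](x),\qquad v\in\mathcal{K}.
\]
This is where the hypothesis $l\ge N$ and the borderline nature of $\Delta_{N}$ enter: the fundamental solution of $\Delta_{N}$ is logarithmic, so $W_{1,N}^{2\mathrm{diam}(\Omega)}[\bar{\mu}]$ grows at worst like $(\log\frac{1}{|x-y|})$-type quantities whose exponentials are locally integrable when $\mu(\Omega)$ is small (a Moser--Trudinger type property for Wolff potentials in the borderline exponent). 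Because the first $l-1$ terms of the Taylor series are subtracted in $H_{l}$, and $l\ge N$, the leading behaviour of $H_{l}(v)$ near points where $v$ is large can be controlled by $v^{l}\,e^{v}$, and the smallness of $M$ ensures that a Moser--Trudinger type inequality yields $\int_{B_{t}(x)}H_{l}(v)\,dy\le \eta\,\mu(B_{2t}(x))$ with $\eta$ as small as desired. Integrating this bound in $t$ inside the Wolff potential gives the claimed estimate and closes the iteration. This Moser--Trudinger step is the main obstacle.

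Given the invariant set, one runs an approximation scheme: replace $\mu$ by smooth $\mu_{n}\to\mu$ in the narrow topology with $\mu_{n}(\Omega)\le\mu(\Omega)$, and replace $H_{l}$ by its truncation $H_{l,n}=\min\{H_{l},n\}$. For the truncated smooth problem, Schauder's theorem applied to the continuous, compact map $T_{n}$ on the convex, closed set $\mathcal{K}$ (compactness of $T_{n}(\mathcal{K})$ in $L^{1}$ follows from standard gradient estimates for $N$-Laplacian problems with bounded right-hand side) produces a solution $u_{n}\in\mathcal{K}$. Finally, I would pass to the limit using the stability theory for renormalized solutions of $N$-Laplacian equations with measure data (as in Dal Maso--Murat--Orsina--Prignet type results): the pointwise bound by $R\,W_{1,N}^{2\mathrm{diam}(\Omega)}[\bar\mu]$ together with the Moser--Trudinger bound gives equi-integrability of $H_{l,n}(u_{n})$, so $u_{n}\to u$ (up to subsequence) in $L^{1}(\Omega)$ with almost everywhere convergence of the truncated gradients, and $u$ is the desired renormalized solution, still satisfying $u\le c_{0}W_{1,N}^{2\mathrm{diam}(\Omega)}[\bar\mu]$.
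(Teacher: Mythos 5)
Your Schauder fixed-point approach is genuinely different from the paper's mechanism, which uses a monotone iteration rather than a fixed-point theorem. The paper constructs $u_0$ from $-\Delta_N u_0 = \mu$ and then, using the comparison/existence result (Theorem \ref{p laplace existence and comparison}), solves $-\Delta_N u_{m+1} = H_l(u_m)+\mu$ inductively, obtaining an \emph{increasing} sequence $u_m \le u_{m+1}$. Monotone convergence then gives the a.e.\ limit for free, and the stability theorem (Theorem \ref{Weak convergence p-Laplace}) identifies the limit as a renormalized solution. This sidesteps the continuity and compactness checks that your plan requires for the operator $T_n$ on $\mathcal{K}$ (which, while doable after you truncate $H_l$ and smooth $\mu$, add real bookkeeping), and it never needs $\mathcal{K}$ to be convex or $T_n$ to be continuous --- only that each step is solvable with a solution dominating the previous one. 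What your route buys in exchange is that it does not rely on a comparison principle, so it would adapt to settings where such a principle is unavailable; here, however, the comparison principle is readily at hand from \cite{Phuc-Verbitsky1}.

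That said, there is a substantive gap: the pointwise invariance estimate
\[
W_{1,N}^{2\mathrm{diam}(\Omega)}\bigl[H_l(v)\bigr]\le \tfrac{R-c_0}{c_0}\,W_{1,N}^{2\mathrm{diam}(\Omega)}[\bar\mu],\qquad v\in\mathcal{K},
\]
which you correctly flag as the main obstacle, is precisely the technical heart of the argument and is only gestured at in your proposal. The paper devotes Lemma \ref{Brezis-Merle type inequality}, Lemma \ref{Iteration potential estimate}, and the first half of Theorem \ref{Theorem u_m upper bound} to establishing exactly this: an exponential integrability bound for $W^R_{\alpha,p}[\bar\mu]$ (a Brezis--Merle / Moser--Trudinger type statement at the borderline exponent), the crucial scaling inequality $\theta^{-l}H_l(t)\le H_l(\theta^{-1}t)$ for $0<\theta\le1$ which is where $l\ge N> p-1$ is exploited, the addition of a small Lebesgue measure $\bar\mu = M\frac{\mu_L}{\mu_L(B_{10\mathrm{diam}\Omega})}+\mu$ to bound $W^R_{\alpha,p}[\bar\mu]$ from below, and a careful choice of $M$ so that $W_{\alpha,p}^R[H_l(2KW_{\alpha,p}^R[\bar\mu])]\le W_{\alpha,p}^R[\bar\mu]$. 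Your sketch (controlling $H_l(v)$ by $v^l e^v$ and asserting $\int_{B_t(x)}H_l(v)\,dy\le\eta\,\mu(B_{2t}(x))$) is not correct in that form --- the exponential integrand is bounded by $Cr^N$ in $B_r(x)$, not by a multiple of the measure of a concentric ball --- and as written does not close. Without the scaling inequality for $H_l$ and the modified measure $\bar\mu$, the closing of the iteration fails, so this step cannot be left as a sketch.
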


Concerning the $k$-Hessian operator we recall some notions introduced
by Trudinger and Wang (\cite{Trudinger-Wang1,Trudinger-Wang2,Trudinger-Wang3}).
For $k=1,\cdots,N$ and $u\in C^{2}(\Omega)$ the $k$-Hessian operator
$F_{k}$ is defined by 
\[
F_{k}[u]=S_{k}(\lambda(D^{2}u)),
\]
where $\lambda(D^{2}u)=\lambda=(\lambda_{1},\cdots,\lambda_{N})$
denotes the eigenvalues of $D^{2}u$, and 
\[
S_{k}=\sum_{1\le i_{1}<\cdots<i_{k}\le N}\lambda_{i_{1}}\cdots\lambda_{i_{k}}
\]
 is the $k$th elementary symmetric polynomial of $\lambda$. For
equations of type (\ref{k-hessian-F}), we always seek for $k$-admissible
solutions, which satisfy
\[
\lambda(-\nabla^{2}u)\in\Gamma_{k}=\{\lambda;S_{1}(\lambda)>0,\cdots,S_{k}(\lambda)>0\}.
\]
For the existence of such solutions, we assume $\partial\Omega$ satisfies
a uniformly $(k-1)$-convex condition, that is 
\[
S_{j}(\kappa)>0,\,\,{\rm on}\,\,\partial\Omega,\,\,{\rm for}\,\,j=1,\cdots,k-1.
\]
where $\kappa=(\kappa_{1},\cdots,\kappa_{N-1})$ denote the principal
curvatures of $\partial\Omega$ with respect to its outward normal. 

We refer to Definition \ref{k-subharmonic} for the definition of
$k$-subharmonic functions and $\Phi^{k}(\Omega)$. 

\begin{thm}\label{Main theorem k hessian}

Let $l\in\mathbb{N}$ and $l>\frac{N}{2}$. Let $\Omega$ be a bounded
uniformly $(\frac{N}{2}-1)$ convex in $\mathbb{R}^{N}$. Let $\mu$
be a nonnegative Radon measure, which has compact support in $\Omega$.
Then there exists $M>0$, such that if 
\[
\mu(\Omega)\le M
\]
then the following Dirichlet problem 
\begin{equation}
\begin{cases}
F_{k}[u] & =H_{l}(u)+\mu,\,\,{\rm in}\,\,\Omega,\\
u & =0,\,\,{\rm on}\,\,\partial\Omega,
\end{cases}\label{N/2-Hessian equation}
\end{equation}
admits a nonnegative solution $u\in\Phi^{k}(\Omega)$, continuous
near $\partial\Omega$, which satisfies
\[
u(x)\le2K_{2}W_{\frac{2N}{N+2},\frac{N+2}{2}}^{2{\rm diam}(\Omega)}[\mu](x)+b,\forall x\in\Omega.
\]

\end{thm}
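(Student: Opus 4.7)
The plan is to follow the scheme that underlies Theorem~\ref{Main Thm p-Laplace}, replacing the $N$-Laplacian Wolff estimate with Labutin's pointwise Wolff potential estimate for the $k$-Hessian operator. First, I approximate $\mu$ by smooth nonnegative functions $\mu_{n}\in C_{c}^{\infty}(\Omega)$ with $\mu_{n}\rightharpoonup\mu$ weakly and $\mu_{n}(\Omega)\le\mu(\Omega)+1/n$. For each $n$, classical existence theory for the $k$-Hessian under the uniformly $(k-1)$-convex boundary hypothesis yields a smooth $k$-admissible solution of $F_{k}[u]=\mu_{n}$ with zero boundary data, which serves as the starting point of an iteration.

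Next, I set up a monotone Picard iteration by defining $u_{n,0}\equiv 0$ and letting $u_{n,j+1}$ solve $F_{k}[u]=H_{l}(u_{n,j})+\mu_{n}$ with $u|_{\partial\Omega}=0$. The comparison principle in $\Phi^{k}(\Omega)$ gives $u_{n,j+1}\ge u_{n,j}$, and Labutin's pointwise estimate yields
\[
0\le u_{n,j}(x)\le K_{2}W_{\frac{2N}{N+2},\frac{N+2}{2}}^{2{\rm diam}(\Omega)}[H_{l}(u_{n,j-1})+\mu_{n}](x).
\]
I work inside the closed convex set
\[
X=\bigl\{v\in C(\overline{\Omega}):\,0\le v(x)\le 2K_{2}W_{\frac{2N}{N+2},\frac{N+2}{2}}^{2{\rm diam}(\Omega)}[\mu_{n}](x)+b\bigr\},
\]
and show that, provided $M=\mu(\Omega)$ is small enough, the iteration stays in $X$. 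The core estimate to verify is
\[
K_{2}W_{\frac{2N}{N+2},\frac{N+2}{2}}^{2{\rm diam}(\Omega)}[H_{l}(v)](x)\le K_{2}W_{\frac{2N}{N+2},\frac{N+2}{2}}^{2{\rm diam}(\Omega)}[\mu_{n}](x)+b\quad\text{for }v\in X,
\]
which hinges on $l>N/2$: expanding $H_{l}(v)=\sum_{j\ge l}v^{j}/j!$ and exploiting that $W_{\frac{2N}{N+2},\frac{N+2}{2}}^{2{\rm diam}(\Omega)}[\mu_{n}]$ has at most a logarithmic singularity on ${\rm supp}\,\mu_{n}$, each term $v^{j}$ with $j\ge l>N/2$ is integrable against the relevant Wolff kernel, and the total sum is controlled by a positive power of $M$.

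The main obstacle is precisely this potential/exponential integrability estimate: one must quantify carefully the interplay between the logarithmic blow-up of $W_{\frac{2N}{N+2},\frac{N+2}{2}}^{2{\rm diam}(\Omega)}[\mu_{n}]$ near ${\rm supp}\,\mu_{n}$ and the exponential growth of $H_{l}$. The threshold $l>N/2$ is the minimal exponent that keeps the iterated Wolff potential of $H_{l}$ applied to a logarithmically singular function finite, mirroring the role of $l\ge N$ in Theorem~\ref{Main Thm p-Laplace}. Once the iteration is trapped in $X$, the monotone limit $u_{n}=\lim_{j\to\infty}u_{n,j}$ solves $F_{k}[u_{n}]=H_{l}(u_{n})+\mu_{n}$ in the $k$-subharmonic sense together with the stated pointwise bound. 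Finally, the stability theory for $k$-subharmonic functions under weak convergence of measures (Trudinger--Wang), combined with dominated convergence for $H_{l}(u_{n})$ (with the uniform envelope $2K_{2}W[\mu]+b$ as majorant), allows passage to the limit $n\to\infty$, producing the desired $u\in\Phi^{k}(\Omega)$, continuous near $\partial\Omega$, satisfying the claimed pointwise Wolff potential estimate.
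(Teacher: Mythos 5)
Your overall iteration scheme --- a monotone Picard iteration controlled pointwise by the $k$-Hessian Wolff potential estimate (Theorem \ref{k Hessian Bdd by wolff potential}), comparison (Theorem \ref{k-hessian comparison}), and Trudinger--Wang stability (Theorem \ref{k-hessian weak convergence }) --- is indeed how the paper proceeds. The genuine gap is in what you call the ``core estimate,'' which you rightly flag as the crux but misdiagnose. The paper proves the needed invariance in Theorem \ref{Theorem u_m upper bound}, which rests on two ingredients that your sketch never identifies: Lemma \ref{Brezis-Merle type inequality}, a Brezis--Merle/Moser--Trudinger type exponential integrability estimate for the borderline Wolff potential $W_{\alpha,p}$ with $\alpha p=N$ (giving $\exp(\delta_0 W_{\alpha,p}^R[\bar\mu])\in L^s$ with constants uniform in $\mu$), and Lemma \ref{Iteration potential estimate}, which iterates this once more through the Wolff operator. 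Your heuristic that ``each term $v^j$ with $j\ge l>N/2$ is integrable against the Wolff kernel'' is a red herring: by the Brezis--Merle estimate \emph{every} power of $W_{\alpha,p}^R[\bar\mu]$ is integrable, for any $l$, so the threshold cannot be about individual-term integrability. The threshold $l>p-1=\tfrac{N}{2}$ enters instead through the scaling step of Theorem \ref{Theorem u_m upper bound}: the elementary inequality $\theta^{-l}H_l(t)\le H_l(\theta^{-1}t)$ for $\theta\in(0,1]$, combined with the $M$-normalized exponential bound, yields a factor $M^{\frac{1}{2(p-1)}(\frac{l}{p-1}-1)}$ in front of $W_{\alpha,p}^R[\bar\mu]$, which tends to zero only when $l>p-1$. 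Without this mechanism your assertion that ``the total sum is controlled by a positive power of $M$'' is a hope, not a proof.

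A secondary point: the approximation by smooth $\mu_n$ that you insert is not in the paper and is unnecessary. Theorem \ref{k Hessian Bdd by wolff potential} and Theorem \ref{k-hessian comparison} already accept data of the form (compactly supported measure) plus ($L^s$ density with $s>N/(2k)=1$), and Theorem \ref{Theorem u_m upper bound} guarantees $H_l(u_m)\in L^s(\Omega)$ for some $s>1$ throughout the iteration, so each step stays in the admissible class without any mollification of $\mu$. Also note the paper's iteration starts from $u_0$ solving $F_{N/2}[-u_0]=\mu$ rather than from $u_0\equiv 0$, which is what makes the base case (\ref{u0 bound}) of Theorem \ref{Theorem u_m upper bound} available immediately.
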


\begin{rmk}Here we note that a major difference between our results
Theorem \ref{Main Thm p-Laplace}, \ref{Main theorem k hessian} and
the results of Theorem \ref{Thm 2.10 of PV1}, the results of \cite{Nguyen-Veron}
is that, we do not need the assumptions of the type (\ref{second equivalent conditon})
or (\ref{NV type condition}). We only need $\mu(\Omega)$ to be small.
\end{rmk}

\section{Estimates of the potentials}

We use $B_{r}(a)$ to denote the ball of radius $r$ and with center
$a$. We use $\mu_{L}$ to denote the Lebesgue measure of $\mathbb{R}^{N}$.
We use ${\rm diam}(\Omega)$ to denote the diameter of , i.e. $\sup\{d(x,y);x,y\in\Omega\}$.
For a finite Radon measure supported on a domain $\Omega$, we may
sometimes consider it as a Radon measure on $\mathbb{R}^{n}$ by extending
it to be $0$ outside $\Omega$. If $D$ is a subset of $\Omega$,
we will use $\mu_{D}$ to represent $\mu$ restricted on $D$. 

We will usually use $C$ to denote a uniform positive constant and
use $C(R_{1},\cdots,R_{s})$ to denote a positive constant which depends
on $R_{1}$ to $R_{s}$. 

\begin{Def}

Given a nonnegative Radon measure $\mu$ on $\mathbb{R}^{N}$, we
define the maximal function of $\mu$ as 
\[
M_{\mu}(x):=\sup_{r>0}\frac{\mu(B_{r}(x))}{|B_{r}(x)|},\,\,x\in E.
\]

\end{Def}

The following weak $(1,1)$ type Hardy-Littlewood maximal inequality
about measure is proved in \cite{Folland}. 

\begin{lem}\label{Maximal function Weak 1 1}

Suppose $\mu$ is a finite Radon measure on $\mathbb{R}^{N}$. There
exists $C(N)>0$ such that 
\[
|\{x\in\mathbb{R}^{N}:M_{\mu}(x)>\lambda\}|\le\frac{C(N)}{\lambda}\mu(\mathbb{R}^{N}),\,\,\forall\lambda>0.
\]

\end{lem}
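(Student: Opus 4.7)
My plan is to prove this via the standard Vitali covering argument adapted to the measure setting. Fix $\lambda > 0$ and set $E_\lambda = \{x \in \mathbb{R}^N : M_\mu(x) > \lambda\}$. For each $x \in E_\lambda$, by the definition of $M_\mu$, I can select a radius $r_x > 0$ such that
\[
\mu(B_{r_x}(x)) > \lambda \,|B_{r_x}(x)| = \lambda \omega_N r_x^N.
\]
The family $\{B_{r_x}(x)\}_{x \in E_\lambda}$ then forms an open cover of $E_\lambda$, and it is essential to observe that the radii are uniformly bounded: since $\mu(B_{r_x}(x)) \le \mu(\mathbb{R}^N) < \infty$, one has $r_x \le \bigl(\mu(\mathbb{R}^N)/(\lambda \omega_N)\bigr)^{1/N}$. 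This finiteness of the radii is exactly what makes the covering step legal.

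Next I would invoke the Vitali-type covering lemma: from any collection of balls with uniformly bounded radii in $\mathbb{R}^N$, one can extract a countable pairwise disjoint subfamily $\{B_{r_i}(x_i)\}_{i \in I}$ such that every ball of the original collection is contained in some $B_{5r_i}(x_i)$. Consequently
\[
E_\lambda \subset \bigcup_{x \in E_\lambda} B_{r_x}(x) \subset \bigcup_{i \in I} B_{5r_i}(x_i).
\]

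Estimating the Lebesgue measure of the right-hand side and using the defining inequality for each selected ball, together with disjointness, gives
\[
|E_\lambda| \le \sum_{i \in I} |B_{5r_i}(x_i)| = 5^N \sum_{i \in I} |B_{r_i}(x_i)| < \frac{5^N}{\lambda}\sum_{i \in I} \mu(B_{r_i}(x_i)) \le \frac{5^N}{\lambda}\,\mu(\mathbb{R}^N),
\]
which yields the claim with $C(N) = 5^N$.

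The only genuine subtlety is ensuring that the Vitali selection step is applicable: one must verify that the radii $r_x$ are bounded (handled above via finiteness of $\mu$) and that the selected subfamily covers $E_\lambda$ after the $5$-fold enlargement. Neither measurability of $E_\lambda$ nor integrability plays a role beyond these steps, so the remainder of the argument is routine; as noted in the excerpt, the full details are available in \cite{Folland}.
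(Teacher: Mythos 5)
Your argument is correct and is the standard Vitali (5$r$-covering) proof of the weak $(1,1)$ inequality for $M_\mu$, which is exactly the argument cited from \cite{Folland}; the paper itself does not reprove the lemma but simply references it. One small remark: the Lebesgue measurability of $E_\lambda$ is automatic since $x\mapsto \mu(B_r(x))$ is lower semicontinuous for open balls, so $M_\mu$ is lower semicontinuous and $E_\lambda$ is open, though as you note the estimate would hold for outer measure regardless.
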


To prove our main theorem, we now generalize a result of Ma and Qing,
\cite[Proposition 4.1]{Ma-Qing-CVPDE}. 

\begin{lem}\label{Brezis-Merle type inequality}

Let $\Omega\subset\mathbb{R}^{N}$ be a bounded domain with diameter
$D$. And let $\mu$ be a finite nonnegative Radon meaure support
in $\Omega$. We let $1<p\le N,\alpha=\frac{N}{p}$. Then, for any
$\delta\in(0,1)$ 
\[
\int_{\Omega}{\rm exp}\left(\frac{N(1-\delta)W_{\alpha,p}^{R}[\mu](x,D)}{\mu(\Omega){}^{\frac{1}{p-1}}}\right)dx\le\frac{c(N)|B(0,D)|}{\delta^{N+1}}.
\]

\end{lem}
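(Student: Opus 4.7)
The strategy is to derive an exponential decay estimate for the superlevel sets $\{W_{\alpha,p}^{R}[\mu]>\lambda\}$ using the weak-$(1,1)$ maximal inequality from Lemma \ref{Maximal function Weak 1 1}, and then recover exponential integrability by a layer-cake integration. First I would normalize: because $W_{\alpha,p}^{R}$ is $(p-1)^{-1}$-homogeneous in $\mu$, setting $\nu=\mu/\mu(\Omega)$ reduces the claim to the case where $\nu$ is a probability measure on $\Omega$ and the exponent in the integrand becomes $N(1-\delta)W_{\alpha,p}^{R}[\nu](x)$. Since $\alpha=N/p$, the scaling factor $t^{N-\alpha p}$ in the Wolff potential is $1$, so $W_{\alpha,p}^{R}[\nu](x)=\int_{0}^{R}\nu(B_{t}(x))^{1/(p-1)}\,dt/t$.

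Second, I would obtain a pointwise bound in terms of the Hardy--Littlewood maximal function $M_{\nu}$. For every $\rho\in(0,R]$, split the integral at $\rho$: on $(0,\rho)$ use $\nu(B_{t}(x))\le c_{N}t^{N}M_{\nu}(x)$, and on $(\rho,R)$ use the trivial bound $\nu(B_{t}(x))\le 1$. This produces
\[
W_{\alpha,p}^{R}[\nu](x)\le C(N,p)\,M_{\nu}(x)^{1/(p-1)}\rho^{N/(p-1)}+\log\frac{R}{\rho}.
\]
Choosing $\rho$ so that the two terms are balanced (namely $\rho\sim R\,(M_{\nu}(x)R^{N})^{-1/N}$ when $M_{\nu}(x)R^{N}\ge 1$, and $\rho=R$ otherwise) gives
\[
W_{\alpha,p}^{R}[\nu](x)\le\frac{1}{N}\log^{+}\!\bigl(M_{\nu}(x)R^{N}\bigr)+K(N,p).
\]

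Third, I would read off the distribution function. For $\lambda>K$, the condition $W_{\alpha,p}^{R}[\nu](x)>\lambda$ forces $M_{\nu}(x)>R^{-N}e^{N(\lambda-K)}$, and Lemma \ref{Maximal function Weak 1 1} (applied to the probability measure $\nu$) yields
\[
\bigl|\{x\in\Omega:W_{\alpha,p}^{R}[\nu](x)>\lambda\}\bigr|\le C'(N,p)\,R^{N}e^{-N\lambda}.
\]
Finally I would apply the layer-cake identity
\[
\int_{\Omega}e^{N(1-\delta)W_{\alpha,p}^{R}[\nu]}\,dx=|\Omega|+N(1-\delta)\int_{0}^{\infty}e^{N(1-\delta)\lambda}\,\bigl|\{W_{\alpha,p}^{R}[\nu]>\lambda\}\bigr|\,d\lambda,
\]
split at a $\delta$-dependent cutoff $\lambda_{0}\sim\log(1/\delta)$, use $|\Omega|\le|B(0,D)|$ on $[0,\lambda_{0}]$ and the exponential decay on $[\lambda_{0},\infty)$. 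The tail integrates to a constant multiple of $R^{N}e^{-N\delta\lambda_{0}}/\delta$; with $R\le D$ and after tracking the $\delta$-dependence of both pieces, the bound $c(N)|B(0,D)|/\delta^{N+1}$ follows.

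The main obstacle I expect is extracting the sharp constant $N$ (rather than $N-\eta$ for some small $\eta$) in the exponential decay of the distribution function: the optimization in $\rho$ must be performed precisely so that the logarithmic term $\log(R/\rho)$ does not degrade the coefficient, which is the analogue of obtaining the sharp Moser--Trudinger constant in the classical Brezis--Merle inequality. The additional factor $\delta^{-N-1}$ on the right-hand side comes from a somewhat lossy treatment of the intermediate range $\lambda\le\lambda_{0}$ and of the region where $M_{\nu}R^{N}\le 1$, and is obtained through careful but routine bookkeeping.
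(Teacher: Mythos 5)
Your proof is correct, and it follows a genuinely different route from the paper's. The paper first replaces the exponent $1/(p-1)$ by $1/q$ for an auxiliary $q\in(p-1,N)$, integrates by parts twice, and invokes Jensen's inequality with respect to the probability measure $d\mu(B(x,t))^{1/q}$ on $[0,D]$, arriving at $\exp(W_{\alpha,p}^D[\mu](x))\le 1+C(q)\,|B(0,D)|^{1/q}M_{\mu}(x)^{1/q}$ and hence a polynomial tail $|\{\exp(W)\ge\lambda\}|\le C(N)\,2^{q}q^{q}(N-q)^{-q}|B(0,D)|\lambda^{-q}$; taking $q$ close to $N$ (depending on $\delta$) is then what generates the $\delta^{-N-1}$ in the final bound, through the factors $(N-q)^{-q}$ and $(q/(N(1-\delta))-1)^{-1}$. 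Your split-and-optimize argument bypasses the intermediate exponent $q$ and the Jensen step entirely: choosing $\rho$ so that the short-range contribution is $O(1)$ leaves $\log(R/\rho)=\tfrac{1}{N}\log(c_{N}M_{\nu}(x)R^{N})+O(1)$, which gives the pointwise estimate $W\le\tfrac{1}{N}\log^{+}(M_{\nu}(x)R^{N})+K(N,p)$ with the sharp coefficient $1/N$ and no loss, exactly addressing the concern you raise at the end. This yields the clean exponential tail $|\{W>\lambda\}|\le C(N,p)R^{N}e^{-N\lambda}$. One remark on your bookkeeping: feeding this into the layer-cake identity, the tail
\[
N(1-\delta)\int_{K}^{\infty}e^{N(1-\delta)\lambda}\,C R^{N}e^{-N\lambda}\,d\lambda
\]
is $O(R^{N}/\delta)$ and the head is $O(|\Omega|)$, so your method actually produces the stronger bound $c(N)|B(0,D)|/\delta$ rather than $c(N)|B(0,D)|/\delta^{N+1}$; the lemma as stated follows a fortiori. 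In short, your approach is more elementary than the paper's and gives a sharper $\delta$-dependence.
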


\begin{proof}

The proof follows similarly from \cite[Proposition 4.1]{Ma-Qing-CVPDE}.
For the convenience of the readers, we present a proof here. To start,
we choose $p-1<q<N$ and assume $\mu(B(x,D))=1.$ Then 
\begin{align*}
W_{\alpha,p}^{D}[\mu](x) & \le\int_{0}^{D}\mu(B(x,t))^{\frac{1}{q}}\frac{dt}{t}\\
 & =\mu(B(x,t))^{\frac{1}{q}}\log t|_{0}^{D}+\int_{0}^{D}\log\frac{1}{t}d\mu(B(x,t))^{\frac{1}{q}}.
\end{align*}
Since 
\[
\mu(B(x,t))\le|B(0,t)|M_{\mu}(x)=\omega_{N}t^{N}M_{\mu}(x)
\]
we have 
\[
\mu(B(x,t))^{\frac{1}{q}}\log t|_{0}^{D}=\log D
\]
almost everywhere.

Then by Jensen's inequality 
\begin{align*}
\exp(W_{\alpha,p}^{D}[\mu](x)) & \le D\int_{0}^{D}\frac{1}{t}d\mu(B(x,t))^{\frac{1}{q}}\\
 & \le D(\frac{1}{t}\mu(B(x,t))^{\frac{1}{q}}|_{0}^{D}+\int_{0}^{D}\mu(B(x,t))^{\frac{1}{q}}\frac{1}{t^{2}}dt).
\end{align*}
Since $q<N$, we know $\frac{1}{t}\mu(B(x,t))^{\frac{1}{q}}|_{0}^{D}=\frac{1}{D}$
almost everywhere.

Then 
\begin{align*}
\exp(W_{\alpha,p}^{D}[\mu](x)) & \le1+D^{\frac{N}{q}}\frac{1}{\frac{N}{q}-1}\omega_{N}^{\frac{1}{q}}M_{\mu}(x)^{\frac{1}{q}}.
\end{align*}
So for $\lambda\ge2$, we have 
\begin{align*}
|\{x\in\Omega;\exp(W_{\alpha,p}^{D}[\mu](x))\ge\lambda\}| & \le|\left\{ x\in\Omega;M_{\mu}(x)\ge\frac{(N-q)^{q}\lambda^{q}}{2^{q}q^{q}|B(0,D)|}\right\} |\\
 & \le\frac{C(N)2^{q}q^{q}|B(0,D)|}{(N-q)^{q}\lambda^{q}},
\end{align*}
where the last inequality is due to Lemma \ref{Maximal function Weak 1 1}. 

For $\delta\in(0,1)$, we fix some $q>\max\{p-1,N(1-\frac{\delta}{2})\}$
and $q<N$
\begin{align*}
\int_{\Omega}{\rm exp}(N(1-\delta)W_{\alpha,p}^{D}[\mu](x))dx & =\int_{0}^{+\infty}|\{x\in\Omega;{\rm exp}(W_{\alpha,p}^{\mu}(x,D))\ge t^{\frac{1}{N(1-\delta)}}\}|\\
 & \le\int_{2^{N(1-\delta)}}^{+\infty}\frac{C(N)2^{q}q^{q}|B(0,D)|}{(N-q)^{q}t^{\frac{q}{N(1-\delta)}}}dt+\int_{0}^{2^{N(1-\delta)}}|\Omega|dt\\
 & \le C(N)\frac{|B(0,D)|}{\delta^{N+1}}.
\end{align*}
For the case $\mu(\Omega)\ne1$, we consider $\frac{\mu}{\mu(\Omega)}$
instead and finally prove the claim.

\end{proof}

\begin{lem}\label{Iteration potential estimate}

Let $2\le p\le N$ and $\alpha=\frac{N}{p}$. Let $\mu$ be a nonnegative
Radon measure supported on $\Omega$ satisfies 
\[
\mu(\Omega)\le1
\]
 and set $\bar{\mu}=\frac{\mu_{L}}{\mu_{L}(B_{10{\rm diam}(\Omega)})}+\mu$.
We fix $R\in(\frac{1}{10}{\rm diam}(\Omega),10{\rm diam}(\Omega))$.
Then there exist uniform positive constants $C_{1},\delta_{0}$ such
that 
\begin{equation}
\|W_{\alpha,p}^{R}[\exp(\delta_{0}W_{\alpha,p}^{R}[\bar{\mu}])]\|_{L^{\infty}(\mathbb{R}^{N})}\le C_{1},\label{potential L infinity estimate}
\end{equation}
and 
\begin{equation}
W_{\alpha,p}^{R}[\exp(\delta_{0}W_{\alpha,p}^{R}[\bar{\mu}])]\le C_{1}W_{\alpha,p}^{R}[\bar{\mu}]\label{Potential bounds}
\end{equation}
in $\Omega$. 

\end{lem}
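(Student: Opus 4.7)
The plan is to chain Lemma~\ref{Brezis-Merle type inequality} with a H\"older-type estimate on the Wolff potential of an $L^q$ density, and then to exploit the uniform Lebesgue component built into $\bar\mu$ as a pointwise lower bound for $W_{\alpha,p}^R[\bar\mu]$ on $\Omega$. Set $\tilde\Omega:=B_{10\,\mathrm{diam}(\Omega)}$. Since $\bar\mu$ is supported in $\tilde\Omega$ with total mass at most $2$ (the normalised Lebesgue part contributes $1$ and $\mu(\Omega)\le 1$), applying Lemma~\ref{Brezis-Merle type inequality} to $\bar\mu$ with $\delta=\tfrac12$ yields, for any fixed $q>1$,
\[
\int_{\tilde\Omega}\exp\!\bigl(q\delta_0 W_{\alpha,p}^R[\bar\mu](x)\bigr)\,dx\le C(N),
\]
provided $\delta_0=\delta_0(N,p,q)$ is small enough that $q\delta_0\,\bar\mu(\tilde\Omega)^{1/(p-1)}\le N/2$. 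Moreover $W_{\alpha,p}^R[\bar\mu]$ vanishes outside the $R$-neighborhood of $\tilde\Omega$, so $f:=\exp(\delta_0 W_{\alpha,p}^R[\bar\mu])\equiv 1$ there and is uniformly bounded in $L^q_{\mathrm{loc}}(\mathbb R^N)$.

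For the $L^\infty$ estimate (\ref{potential L infinity estimate}) I exploit that $\alpha p=N$ collapses the Wolff potential to $W_{\alpha,p}^R[f](x)=\int_0^R\bigl(\int_{B_t(x)}f\,dy\bigr)^{1/(p-1)}\frac{dt}{t}$. H\"older applied to $\int_{B_t(x)}f\cdot 1\,dy$ gives
\[
\int_{B_t(x)}f\,dy\le \|f\|_{L^q(B_t(x))}\,\omega_N^{1-1/q}\,t^{N(1-1/q)}\le C\,t^{N(1-1/q)}\qquad(t\le R),
\]
with $C$ depending only on $N,p,q,\mathrm{diam}(\Omega)$ via the $L^q$ bound above. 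Since $q>1$ makes $N(1-1/q)/(p-1)>0$, insertion into the Wolff integral yields $W_{\alpha,p}^R[f](x)\le C\int_0^R t^{N(1-1/q)/(p-1)-1}\,dt\le C_1$ uniformly in $x\in\mathbb R^N$.

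Finally (\ref{Potential bounds}) follows from a pointwise lower bound coming from the Lebesgue part of $\bar\mu$. For $x\in\Omega$ and sufficiently small $t$ the ball $B_t(x)$ lies in $\tilde\Omega$, so $\bar\mu(B_t(x))\ge \omega_N t^N/\mu_L(\tilde\Omega)=c_0 t^N$ with $c_0$ depending only on $N$ and $\mathrm{diam}(\Omega)$; integrating yields
\[
W_{\alpha,p}^R[\bar\mu](x)\ge c_0^{1/(p-1)}\!\int_0^{R_0} t^{N/(p-1)-1}\,dt=c_2>0
\]
uniformly on $\Omega$, where $R_0>0$ is independent of $x$. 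Combined with the $L^\infty$ bound from the previous paragraph, this gives (\ref{Potential bounds}) after enlarging $C_1$ by $1/c_2$. The delicate point is the balance in the first step: $\delta_0$ has to be small enough for Brezis--Merle to provide $L^q$ integrability of $f$ with some $q>1$, which is exactly what the H\"older step needs because $\alpha p=N$ makes $\int_0^R t^{N(1-1/q)/(p-1)-1}\,dt$ finite for every $q>1$.
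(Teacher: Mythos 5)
Your proof is correct, and it takes a genuinely different route to the $L^\infty$ bound \eqref{potential L infinity estimate} than the paper does. The paper's argument is local: it splits $W_{\alpha,p}^R[\bar\mu](y)=W_{\alpha,p}^r[\bar\mu](y)+\int_r^R$, controls the tail by a logarithm, uses convexity of $\exp$ to separate the two contributions, and then applies Lemma~\ref{Brezis-Merle type inequality} \emph{on each ball $B_r(x)$} with the restricted measure $\bar\mu|_{B_{2r}(x)}$ to get $\int_{B_r(x)}\exp(\tfrac12 N(1-\delta)W_{\alpha,p}^r[\bar\mu])\le C r^N$, absorbing the error term $C(R)r^{-\theta C_2}$ by taking $\theta$ small. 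You instead apply Lemma~\ref{Brezis-Merle type inequality} \emph{once, globally}, to extract an $L^q$ bound on $f=\exp(\delta_0 W_{\alpha,p}^R[\bar\mu])$ for a fixed $q>1$, and then run Hedberg's standard argument: H\"older gives $\int_{B_t(x)}f\le C t^{N(1-1/q)}$, and since $\alpha p=N$ the resulting Wolff integral $\int_0^R t^{N(1-1/q)/(p-1)-1}\,dt$ converges. Both routes lose a polynomial power of $t$ relative to the sharp $t^N$ (your loss is $t^{N/q}$, the paper's is $r^{\theta C_2}$), and both are fine because any positive exponent makes the Wolff integral converge. Your version is arguably cleaner since it avoids the $\exp$-convexity split and the careful matching of $\theta$; it also makes the $L^s$, $s>1$, integrability of $f$ (used later in Theorem~\ref{Theorem u_m upper bound}) an immediate byproduct rather than a remark. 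Two small points worth tightening: the constant from Lemma~\ref{Brezis-Merle type inequality} carries a factor $|B(0,D)|$, so the $L^q$ bound on $f$ depends on $\mathrm{diam}(\Omega)$, not only on $N$ (harmless here, but ``$C(N)$'' is imprecise); and the phrase ``uniformly bounded in $L^q_{\mathrm{loc}}$'' should be made quantitative --- since $f\equiv 1$ outside the $R$-neighborhood of $\tilde\Omega$, you get $\|f\|_{L^q(B_t(x))}\le C+\omega_N^{1/q}t^{N/q}$ for all $x$ and $t\le R$, which after H\"older still yields $\int_{B_t(x)}f\le C t^{N(1-1/q)}$ for $t\le R$. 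The lower bound on $W_{\alpha,p}^R[\bar\mu]$ from the Lebesgue component, used for \eqref{Potential bounds}, coincides with the paper's argument.
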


\begin{proof}

First we have 
\[
\bar{\mu}(\Omega)\le2.
\]
For $0<r<R$, and $y\in\Omega$, we have 
\begin{align*}
W_{\alpha,p}^{R}[\bar{\mu}](y) & =W_{\alpha,p}^{r}[\bar{\mu}](y)+\int_{r}^{R}\bar{\mu}(B_{t}(y))^{\frac{1}{p-1}}\frac{dt}{t}\\
 & \le W_{\alpha,p}^{r}[\bar{\mu}](y)+C(\log R-\log r)\\
 & \le W_{\alpha,p}^{r}[\bar{\mu}](y)+C\log\frac{1}{r}+C(R).
\end{align*}
From $\exp((a+b)/2)\le(\exp(a)+\exp(b))/2$, for some $\theta\in(0,2^{-\frac{1}{N-1}}]$
to be fixed later, we have 
\begin{align*}
\exp(\theta N(1-\delta)2^{-2}W_{\alpha,p}^{R}[\bar{\mu}(y)])\le & \frac{1}{2}\exp(\frac{1}{2}N(1-\delta)W_{\alpha,p}^{r}[\bar{\mu}](y))\\
 & +C(R)\exp(\theta C\log\frac{1}{r})\\
\le & \frac{1}{2}\exp(\frac{1}{2}N(1-\delta)W_{\alpha,p}^{r}[\bar{\mu}](y))\\
 & +C(R)r^{-\theta C_{2}}.
\end{align*}
Then for $r>0$, from Lemma \ref{Brezis-Merle type inequality}, we
have 
\begin{align*}
\int_{B_{r}(x)}\exp(\frac{1}{2}N(1-\delta)W_{\alpha,p}^{r}[\bar{\mu}](y))dy\le & \int_{B_{r}(x)}\exp(\frac{N(1-\delta)W_{\alpha,p}^{r}[\bar{\mu}](y)}{\bar{\mu}(B_{2r}(x))^{\frac{1}{p-1}}})dy\\
\le & Cr^{N}.
\end{align*}
 Therefore by choosing $\theta>0$ small such that $\theta C_{2}\le\frac{1}{2}$
we have 
\begin{align*}
W_{\alpha,p}^{R}[\exp(\theta N(1-\delta)2^{-2}W_{\alpha,p}^{R}[\bar{\mu}][y])](x) & \le\int_{0}^{R}(Cr^{N}+Cr^{-\theta C_{1}}r^{N})^{\frac{1}{p-1}}\frac{dr}{r}\\
 & \le C.
\end{align*}

Then we may choose $\delta_{0}=\frac{N(1-\delta)}{8C_{2}}$ and get
(\ref{potential L infinity estimate}). 

Also note that 
\begin{align*}
W_{\alpha,p}^{R}[\bar{\mu}](x) & =\int_{0}^{R}\bar{\mu}(B(x,t))^{\frac{1}{p-1}}\frac{dt}{t}\\
 & \ge\int_{0}^{R}(\frac{\mu_{L}(B(x,t))}{\mu_{L}(B_{10{\rm diam}(\Omega)})})^{\frac{1}{p-1}}\frac{dt}{t}\\
 & \ge C>0.
\end{align*}
Then (\ref{Potential bounds}) follows.

\end{proof}

Then we have 

\begin{thm}\label{Theorem u_m upper bound}

Let $2\le p\le N$ and $\alpha=\frac{N}{p}$. Suppose $\mu$ is a
finite nonnegative Radon measure on $\Omega$. Assume $R\in[\frac{1}{10}{\rm diam}(\Omega),10{\rm diam}(\Omega)]$
and $K>0$ are positive real numbers. We fix an integer $l>p-1$.
Suppose $\{u_{m}\}$ is a sequence of nonnegative measurable functions
on $\mathbb{R}^{N}$ which satisfies
\begin{align}
u_{0} & \le KW_{\alpha,p}^{R}[\mu],\label{u0 bound}\\
u_{m+1} & \le KW_{\alpha,p}^{R}[H_{l}(u_{m})+\mu],\forall m\in\mathbb{N}.\label{u_=00007Bm+1=00007D bounded by u_m}
\end{align}
Then there exists $M>0$ depending only on $N,l,K,\delta_{0},C_{1}$
such that if 
\begin{align}
 & \mu(\Omega)\le M,\label{mu upper bdd}\\
\bar{\mu}= & M\frac{\mu_{L}}{\mu_{L}(B_{10{\rm diam}(\Omega)}(0))}+\mu,\label{bar mu definition}
\end{align}
there holds 
\begin{equation}
H_{l}(2KW_{\alpha,p}^{R}[\bar{\mu}])\in L^{s}(\Omega),\,\,{\rm for}\,\,{\rm some}\,\,s>1\label{Hl(W) estimate}
\end{equation}
and 
\begin{equation}
u_{m}\le2KW_{\alpha,p}^{R}[\bar{\mu}],\forall m\in\mathbb{N}.\label{u_m bound}
\end{equation}

\end{thm}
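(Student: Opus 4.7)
The plan is to prove (\ref{u_m bound}) by induction on $m$, setting $W := W^R_{\alpha,p}[\bar{\mu}]$. The base case is immediate: since $\mu \le \bar{\mu}$ and $W^R_{\alpha,p}$ is monotone in the measure, $u_0 \le K W^R_{\alpha,p}[\mu] \le K W \le 2K W$. For the inductive step, assume $u_m \le 2K W$. Because $p \ge 2$ gives $1/(p-1) \le 1$, the elementary inequality $(a+b)^{1/(p-1)} \le a^{1/(p-1)} + b^{1/(p-1)}$ yields the sub-additivity $W^R_{\alpha,p}[\nu_1 + \nu_2] \le W^R_{\alpha,p}[\nu_1] + W^R_{\alpha,p}[\nu_2]$. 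Combined with the monotonicity of $H_l$ and of $W^R_{\alpha,p}$, the defining recursion for $u_{m+1}$ gives
$$u_{m+1} \le K W^R_{\alpha,p}[H_l(2KW)] + K W^R_{\alpha,p}[\mu] \le K W^R_{\alpha,p}[H_l(2KW)] + K W,$$
so it suffices to prove the contraction-type bound $W^R_{\alpha,p}[H_l(2KW)] \le W$.

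To establish this, I would combine the pointwise estimate $H_l(r) \le r^l e^r / l!$ (which follows directly from the power-series definition) with the elementary inequality $r^l \le (l/c)^l e^{-l} e^{cr}$ to obtain, for any $c > 0$,
$$H_l(2KW) \le A(K,l,c)\, \exp\bigl((c+2K)W\bigr), \qquad A(K,l,c) := \frac{(2K)^l (l/c)^l e^{-l}}{l!}.$$
Next I would apply Lemma \ref{Iteration potential estimate} to the rescaled measure $\mu/M$, whose total mass is at most $1$ and whose associated lemma-$\bar{\mu}$ is precisely $\bar{\mu}/M$; combined with the scaling $W^R_{\alpha,p}[\lambda \nu] = \lambda^{1/(p-1)} W^R_{\alpha,p}[\nu]$, this gives
$$W^R_{\alpha,p}\bigl[\exp(\delta_0 M^{-1/(p-1)} W)\bigr] \le C_1 M^{-1/(p-1)} W.$$
Choosing $c := \delta_0 M^{-1/(p-1)} - 2K$, which is positive once $M$ is small, makes $c + 2K$ match $\delta_0 M^{-1/(p-1)}$, while $A(K,l,c) \le C(K,l,\delta_0)\, M^{l/(p-1)}$. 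Multiplying out the scaling factor, these two estimates combine into
$$W^R_{\alpha,p}[H_l(2KW)] \le C'(K,l,\delta_0,C_1)\, M^{(l-p+1)/(p-1)^2}\, W.$$

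The heart of the argument, and the reason the hypothesis $l > p-1$ is imposed, is that the exponent $(l-p+1)/(p-1)^2$ is strictly positive precisely under this assumption; taking $M$ small then forces the prefactor below $1$ and closes the induction. Without exploiting the polynomial vanishing $H_l(r) = O(r^l)$ near zero, the crude bound $H_l(r) \le e^r$ would push $M$ in the \emph{wrong} direction, so choosing $c$ so that both the polynomial factor $(l/c)^l$ contributes a small power of $M$ and the exponential argument $c + 2K$ lands exactly at the threshold permitted by Lemma \ref{Iteration potential estimate} is the delicate step. Finally, the $L^s$ assertion (\ref{Hl(W) estimate}) follows from raising the pointwise bound on $H_l(2KW)$ to the $s$-th power and applying Lemma \ref{Brezis-Merle type inequality}, which controls $\exp(aW)$ in $L^1(\Omega)$ for $a \le N(1-\delta)\bar{\mu}(\Omega)^{-1/(p-1)}$ with $\bar{\mu}(\Omega) \le 2M$; because $\delta_0$ in Lemma \ref{Iteration potential estimate} was chosen with slack relative to the Brezis--Merle threshold, some $s > 1$ always satisfies $s\delta_0 M^{-1/(p-1)} \le N(1-\delta)(2M)^{-1/(p-1)}$.
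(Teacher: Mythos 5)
Your proposal is correct and follows essentially the same strategy as the paper: induction on $m$ reduced to the contraction bound $W^R_{\alpha,p}[H_l(2KW)]\le W$, obtained by applying Lemma \ref{Iteration potential estimate} to the rescaled measure $\mu/M$ and using the scaling $W^R_{\alpha,p}[\lambda\nu]=\lambda^{1/(p-1)}W^R_{\alpha,p}[\nu]$, with the critical exponent $(l-p+1)/(p-1)^{2}>0$ coming from the same source. The only substantive difference is the algebraic step that extracts the small power of $M$ from the polynomial vanishing of $H_l$ at the origin: you bound $H_l(r)\le r^l e^r/l!$ and then split $r^l\le(l/c)^l e^{-l}e^{cr}$ with $c$ chosen so the exponential argument hits exactly $\delta_0 M^{-1/(p-1)}$, whereas the paper uses the tighter, series-level inequality $\theta^{-l}H_l(t)\le H_l(\theta^{-1}t)$ for $0<\theta\le1$ to absorb a factor $\theta^{-l}=M^{-\frac{1}{2}(\frac{l}{p-1}+1)}$ directly into $H_l$ before comparing with $\exp(\delta_0 W^R_{\alpha,p}[M^{-1}\bar\mu])$. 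Both routes yield the same final prefactor $\sim M^{(l-p+1)/(p-1)^2}$ and require $l>p-1$ for the same reason, so the two arguments are interchangeable; the paper's version is a bit cleaner because it avoids introducing the auxiliary parameter $c$ and keeps all constants explicit.
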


\begin{proof}

First prove that, we may choose $M>0$ small such that when $\mu$
and $\bar{\mu}$ satisfy (\ref{mu upper bdd})(\ref{bar mu definition}),
then

\begin{align*}
H_{l}(2KW_{\alpha,p}^{R}[\bar{\mu}])\in L^{s}(\Omega), & \,\,{\rm for}\,\,{\rm some}\,\,s>1,\\
W_{\alpha,p}^{R}[H_{l}(2K\cdot W_{\alpha,p}^{R}[\bar{\mu}])] & \le W_{\alpha,p}^{R}[\bar{\mu}].
\end{align*}
By Lemma \ref{Iteration potential estimate}, there exists $C>0$
and $\delta_{0}>0$ independent of $\bar{\mu}$ such that $\exp(\delta_{0}W_{\alpha,p}^{R}[M^{-1}\bar{\mu}])$
is integrable in $\Omega$ and 
\[
W_{\alpha,p}^{R}[\exp(\delta_{0}W_{\alpha,p}^{R}(M^{-1}\bar{\mu}))]\le C_{1}W_{\alpha,p}^{R}[M^{-1}\bar{\mu}]\,\,{\rm in}\,\,\bar{\Omega}.
\]
Here in fact we choose $\delta_{0}$ even smaller, so that $\exp(\delta_{0}W_{\alpha,p}^{R}[M^{-1}\bar{\mu}])\in L^{s}(\Omega)$
for some $s>1$. 

Since $l>p-1\ge1$, then for all $t\ge0$ and $0<\theta\le1$
\begin{align*}
\theta^{-l}H_{l}(t) & =\theta^{-l}(e^{t}-\sum_{i=1}^{l-1}\frac{t^{j}}{j!})=\theta^{-l}\sum_{i=l}^{\infty}\frac{t^{j}}{j!}\le\sum_{i=l}^{\infty}\frac{(\theta^{-1}t)^{j}}{j!}=H_{l}(\theta^{-1}t).
\end{align*}
It follows that 
\begin{align*}
 & W_{\alpha,p}^{R}[M^{-\frac{1}{2}(\frac{l}{p-1}+1)}H_{l}(\delta_{0}M^{-\frac{1}{2}(\frac{1}{p-1}-\frac{1}{l})}W_{\alpha,p}^{R}[\bar{\mu}])]\\
\le & W_{\alpha,p}^{R}[H_{l}(\delta_{0}M^{-\frac{1}{p-1}}W_{\alpha,p}^{R}[\bar{\mu}])]\\
\le & W_{\alpha,p}^{R}[\exp(\delta_{0}W_{\alpha,p}^{R}[M^{-1}\bar{\mu}])]\\
\le & C_{1}M^{-\frac{1}{p-1}}W_{\alpha,p}^{R}[\bar{\mu}].
\end{align*}
 Then 
\[
W_{\alpha,p}^{R}[H_{l}(\delta_{0}M^{-\frac{1}{2}(\frac{1}{p-1}-\frac{1}{l})}W_{\alpha,p}^{R}[\bar{\mu}])]\le C_{1}M^{\frac{1}{2(p-1)}(\frac{l}{p-1}-1)}W_{\alpha,p}^{R}[\bar{\mu}].
\]
Then if we choose $M=\min\{(\frac{\delta_{0}}{2K})^{\frac{2(p-1)l}{l-p+1}},C_{1}^{-\frac{2(p-1)^{2}}{l-p+1}}\}$,
we have $H_{l}(2KW_{\alpha,p}^{R}[\bar{\mu}])\in L^{s}(\Omega)$ for
some $s>1$ and 
\begin{equation}
W_{\alpha,p}^{R}[H_{l}(2KW_{\alpha,p}^{R}[\bar{\mu}])]\le W_{\alpha,p}^{R}[\bar{\mu}].\label{potential iteration inequality}
\end{equation}
Now we will prove (\ref{u_m bound}) by induction. From (\ref{u0 bound}),
(\ref{u_m bound}) holds for $m=0$. Now we assume (\ref{u_m bound})
holds with $m=k$. Then from (\ref{u_=00007Bm+1=00007D bounded by u_m})
and $(\ref{potential iteration inequality})$
\begin{align*}
u_{k+1} & \le KW_{\alpha,p}^{R}[H_{l}(u_{k})+\bar{\mu}]\\
 & \le KW_{\alpha,p}^{R}[H_{l}(2KW_{\alpha,p}^{R}[\bar{\mu}])+\bar{\mu}]\\
 & \le KW_{\alpha,p}^{R}[H_{l}(2KW_{\alpha,p}^{R}[\bar{\mu}])]+KW_{\alpha,p}^{R}[\bar{\mu}]\\
 & \le2KW_{\alpha,p}^{R}[\bar{\mu}].
\end{align*}
Then we completes the proof. 

\end{proof}

\section{Quasilinear equations with Dirichlet boundary conditions}

In this section, we use the estimates in the last section to study
the existence of solutions to 
\begin{equation}
\begin{cases}
-\Delta_{N}u & =H(u)+\mu,\,\,{\rm in}\,\Omega\\
u & =0,\,\,{\rm on}\,\partial\Omega.
\end{cases}\label{N-Laplacian equation}
\end{equation}
First we need some basic knowledges related to $p$-Laplacian equations.

Let $\Omega$ be a bounded domain in $\mathbb{R}^{N}$, $1<p\le N$
and $p'=\frac{p}{p-1}$. We use $\mathfrak{M}_{b}(\Omega)$ to deonte
the set of all finite Radon measures on $\Omega$. If $\mu\in\mathfrak{M}_{b}(\Omega)$,
by Jordan decomposition, we denote $\mu^{+}$ and $\mu^{-}$ as the
positive part and negative part of $\mu$ and use $\mathfrak{M}_{b}^{+}$
to denote the set of nonnegative Radon measures . We use $\mathfrak{M}_{0,p}(\Omega)$
to denote the space of measures in $\Omega$ which are absolutely
continuous with repect to $c_{1,p}^{\Omega}$-capacity, which for
any compact subset $K\subset\Omega$, is defined by 
\[
c_{1,p}^{\Omega}(K)=\inf\left\{ \int_{\Omega}|\nabla\phi|^{p}dx;\phi\in C_{0}^{\infty}(\Omega),\phi\ge\chi_{K}\right\} ,
\]
where 
\[
\chi_{E}=\begin{cases}
1, & x\in E,\\
0, & {\rm otherwise}.
\end{cases}
\]
Also, we use $\mathfrak{M}_{s,p}(\Omega)$ to denote the space of
measures in $\Omega$ supported on a set of zero $c_{1,p}^{\Omega}$-capacity.
We know, any $\mu\in\mathfrak{M}_{b}(\Omega)$ can be decomposed as
$\mu=\mu_{0}+\mu_{s}$ in a unique way, such that $\mu_{0}\in\mathfrak{M}_{0,p}\cap\mathfrak{M}_{b}$
and $\mu_{s}\in\mathfrak{M}_{s,p}\cap\mathfrak{M}_{b}$. And it is
well known that $\mu_{0}$ part can be written as $f-{\rm div}g$
where $f\in L^{1}(\Omega)$ and $g\in L^{p'}(\Omega,\mathbb{R}^{N})$,
see \cite{Dal-Murat-Orsina-Prignet}.

For $k>0$ and $s\in\mathbb{R}$, we set $T_{k}(s)=\max\{\min\{s,k\},-k\}.$
From \cite{Ben-Boc-Gal-Gar-Pie-Vaz}, if $u$ is a measurable function
on $\Omega,$ finite almost everywhere such that $T_{k}(u)\in W_{{\rm loc}}^{1,p}(\Omega)$
for any $k>0$, there exists a measurable function $v(x):\Omega\to\mathbb{R}^{N}$
such that $\nabla T_{k}(u)=\chi_{\{|u|\le k\}}(x)v(x)$ almost everywhere
in $\Omega$ and for all $k>0$. We define $\nabla u=v$. Now we recall
the definition of a renormalized solution given in \cite{Dal-Murat-Orsina-Prignet}. 

\begin{Def}

Let $1<p\le N$ and $\mu=\mu_{0}+\mu_{s}\in\mathfrak{M}_{b}(\Omega)$
. A measurable function $u$ defined in $\Omega$ and finite almost
everywhere is called a renormalized solution of 
\begin{equation}
\begin{cases}
-\Delta_{p}u & =\mu,\,\,{\rm in}\,\,\Omega,\\
u & =0,\,\,{\rm on}\,\,\partial\Omega,
\end{cases}\label{p-Laplace equation}
\end{equation}
 if $T_{k}(u)\in W_{0}^{1,p}(\Omega)$ for any $k>0$, $|\nabla u|^{p-1}\in L^{r}(\Omega)$
for any $1<r<\frac{n}{n-1}$, and $u$ has the property that for any
$k>0$ there exist $\lambda_{k}^{+}$ and $\lambda_{k}^{-}$ belonging
to $\mathfrak{M}_{b}^{+}\cap\mathfrak{M}_{0}(\Omega)$, respectively
concentrated on $u=k$ and $u=-k$ with the property that $\mu_{k}^{+}\to\mu_{s}^{+},\mu_{k}^{-}\to\lambda_{s}^{-}$
in the narrow topology of measures (that is, with test funciton $\varphi\in L^{\infty}(\Omega)\cap C(\Omega)$)
and such that 
\[
\int_{\{|u|<k\}}|\nabla u|^{p-2}\nabla u\nabla\phi dx=\int_{\{|u|<k\}}\phi d\mu_{0}+\int_{\Omega}\phi d\lambda_{k}^{+}-\int_{\Omega}\phi d\lambda_{k}^{-},
\]
for every $\phi\in W_{0}^{1,p}(\Omega)\cap L^{\infty}(\Omega)$. 

\end{Def}

Thanks to \cite{Dal-Murat-Orsina-Prignet}, we have the important
stability result. 

\begin{thm}\label{Weak convergence p-Laplace}

Let $1<p\le N$, $\mu=\mu_{0}+\mu_{s}^{+}-\mu_{s}^{-}$ with $\mu_{0}=F-{\rm div}g\in\mathfrak{M}_{0}(\Omega)$
and $\mu_{s}^{+},\mu_{s}^{-}\in\mathfrak{M}_{s}(\Omega)\cap\mathfrak{M}^{+}(\Omega).$
Let $\mu_{n}=F_{n}-{\rm div}g_{n}+\rho_{n}-\eta_{n}$ with $F_{n}\in L^{1}(\Omega)$,
$g_{n}\in L^{p'}(\Omega)$ and $\rho_{n},\eta_{n}\in\mathfrak{M}_{b}^{+}\cap\mathfrak{M}_{s}$.
Assume $F_{n}\overset{L^{1}}{\rightharpoonup}F$ weakly, $g_{n}\overset{L^{p'}}{\to}g$
strongly and ${\rm div}g_{n}$ is bounded in $\mathfrak{M}_{b}$;
Assume also that $\rho_{n}\to\mu_{s}^{+},\eta_{n}\to\mu_{s}^{-}$
in the narrow topology. If $u_{n}$ is a sequence renormalized solutions
to (\ref{N-Laplacian equation}) with $\mu=\mu_{n}$, then, up to
a subsequence, it converges a.e.in $\Omega$ to a renormalized solution
$u$ of (\ref{p-Laplace equation}). Furthermore, $T_{k}(u_{n})$
converges to $T_{k}(u)$ in $W_{0}^{1,p}(\Omega)$ for any $k>0$. 

\end{thm}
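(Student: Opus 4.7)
The plan is to follow the framework of Dal Maso--Murat--Orsina--Prignet, splitting the argument into a priori estimates, extraction of a limit, and then the delicate identification of that limit as a renormalized solution. First I would test the renormalized equation for $u_n$ with $T_k(u_n)$; since $|\mu_n|(\Omega)$ is uniformly bounded (this follows from the weak $L^1$ bound on $F_n$, the strong $L^{p'}$ convergence of $g_n$, and the narrow convergence of $\rho_n,\eta_n$), this yields a uniform bound
\[
\int_\Omega |\nabla T_k(u_n)|^p\,dx \le C k
\]
for every $k>0$. Standard Marcinkiewicz-type arguments then give $|\nabla u_n|^{p-1}$ bounded in $L^r(\Omega)$ for every $1<r<N/(N-1)$, and $u_n$ bounded in $M^{(p-1)N/(N-p)}$ when $p<N$ (for $p=N$ a logarithmic analogue). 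From these bounds, a diagonal subsequence argument produces $u$ finite a.e.\ with $u_n\to u$ a.e., $T_k(u_n)\rightharpoonup T_k(u)$ weakly in $W_0^{1,p}(\Omega)$, and $|\nabla u_n|^{p-2}\nabla u_n\to |\nabla u|^{p-2}\nabla u$ weakly in $L^r$ for $r$ in the above range.

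The main obstacle, and the heart of the proof, is to upgrade this weak convergence to the strong convergence of $T_k(u_n)$ in $W_0^{1,p}(\Omega)$, because the singular parts $\rho_n,\eta_n$ live on sets of zero capacity and block the naive Minty monotonicity argument. To handle this I would follow the standard two-step test function strategy. Step one: insert a test function that localizes away from the level sets $\{|u_n|=k\}$ and from the supports of $\rho_n,\eta_n$ — typically $\phi=h_m(u_n)(T_k(u_n)-T_k(u))$ with $h_m$ a smooth cutoff vanishing for $|s|\ge m+1$ — using the narrow convergence of $\rho_n,\eta_n$ together with the fact that approximating cutoffs of $u$ are $c_{1,p}^{\Omega}$-quasicontinuous to show that the singular contributions disappear in the limit in $n$, then in $m$. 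Step two: combine this with the Browder--Minty monotonicity of the $p$-Laplacian vector field to deduce $\nabla T_k(u_n)\to\nabla T_k(u)$ in measure, hence a.e., and then by Vitali $T_k(u_n)\to T_k(u)$ strongly in $W_0^{1,p}(\Omega)$.

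Once strong convergence of truncations is established, I would construct the measures $\lambda_k^\pm$ for the limit $u$. The natural candidates arise as the weak-$*$ limits in $\mathfrak{M}_b(\Omega)$ of the measures $\mu_{n,k}^\pm$ attached to $u_n$ on the level sets $\{u_n=\pm k\}$; a diagonal extraction, coupled with the fact that $\rho_n\to\mu_s^+$ and $\eta_n\to\mu_s^-$ narrowly while the diffuse parts $F_n-\operatorname{div}g_n$ pass to $\mu_0$, produces $\lambda_k^+,\lambda_k^-\in\mathfrak{M}_b^+\cap\mathfrak{M}_0(\Omega)$ concentrated on $\{u=\pm k\}$ with $\lambda_k^+\to\mu_s^+$, $\lambda_k^-\to\mu_s^-$ narrowly as $k\to\infty$. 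Finally, for any test function $\phi\in W_0^{1,p}(\Omega)\cap L^\infty(\Omega)$, I would pass to the limit in the renormalized equation for $u_n$ term by term: the weak $L^{p'}$ convergence of $|\nabla T_k(u_n)|^{p-2}\nabla T_k(u_n)$ handles the principal part on $\{|u_n|<k\}$, the $L^1$ weak convergence of $F_n$ and strong $L^{p'}$ convergence of $g_n$ handle the diffuse part, and the narrow convergence of $\lambda_{n,k}^\pm$ handles the boundary-of-domain terms. This yields the renormalized formulation for $u$, completing the argument.
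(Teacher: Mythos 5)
The paper does not prove this theorem at all; it is quoted verbatim from Dal Maso--Murat--Orsina--Prignet \cite{Dal-Murat-Orsina-Prignet} with the line ``Thanks to \cite{Dal-Murat-Orsina-Prignet}, we have the important stability result.'' Your sketch is a faithful and reasonably complete outline of the DMOP stability argument (a priori estimates from testing with truncations, Marcinkiewicz bounds, a.e.\ convergence of $u_n$ and weak convergence of truncations, then the crucial upgrade to strong $W_0^{1,p}$ convergence of $T_k(u_n)$ via the $h_m(u_n)(T_k(u_n)-T_k(u))$ test function and Minty monotonicity, and finally passage to the limit in the renormalized formulation), so it takes the same route as the cited source.

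One point worth flagging for precision: your Marcinkiewicz estimate $u_n\in M^{(p-1)N/(N-p)}$ degenerates at $p=N$, and the statement here explicitly allows $p=N$ (indeed the paper uses it with $p=N$). For $p=N$ the correct replacement is exponential integrability of $u_n$, or simply that $u_n$ lies in every $L^q(\Omega)$, $q<\infty$; you gesture at this with ``a logarithmic analogue,'' but since the borderline case is the one the paper actually uses, it deserves to be made explicit rather than parenthetical.
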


Also we need the following estimate proved in \cite{Phuc-Verbitsky1},
which extends Theorem 1.6 of \cite{Kilp and Maly Acta} slightly.

\begin{thm}\label{p-Laplace estimate}(Theorem 2.1 of \cite{Phuc-Verbitsky1})

Let $1<p\le N$, $\Omega$ be a bounded domain of $\mathbb{R}^{N}$.
Then there exists a constant $K_{1}>0$, depending only on $p$ and
$N$ such that if $\mu\in\mathfrak{M}_{b}^{+}(\Omega)$ and $u$ is
a nonnegative renormalized solution of (\ref{p-Laplace equation})
with data $\mu,$ there holds
\[
\frac{1}{K_{1}}W_{1,p}^{\frac{d(x,\partial\Omega)}{3}}[\mu](x)\le u(x)\le K_{1}W_{1,p}^{2{\rm diam}(\Omega)}[\mu](x),\forall x\in\Omega
\]
 where the positive constant $K_{1}$ only depends on $n,p$.

\end{thm}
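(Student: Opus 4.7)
The plan is to adapt the Kilpel\"ainen--Mal\'y pointwise Wolff-potential estimate for $p$-superharmonic functions to renormalized solutions of the Dirichlet problem, keeping careful track of the boundary. Since $u\ge 0$ vanishes on $\partial\Omega$ in the renormalized sense, extending $u$ by zero outside $\Omega$ produces a nonnegative lower-semicontinuous function whose distributional $-\Delta_p$ is bounded above by the trivial extension of $\mu$. On any ball containing $\Omega$ the extension is $p$-superharmonic, so the classical Kilpel\"ainen--Mal\'y machinery is available on scales up to $2\,{\rm diam}(\Omega)$.

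For the upper bound, I would test the renormalized equation on $B_{2r}(x_0)\cap\Omega$ with functions of the form $(k-T_k(u))_{+}\zeta^p$, where $\zeta$ is a cutoff supported in $B_{2r}(x_0)$. The resulting Caccioppoli-type inequality, combined with Sobolev embedding, yields a reverse-H\"older estimate; a De Giorgi/Moser iteration on shrinking concentric balls then produces
\[
u(x_0)\le C\left(\frac{\mu(B_{2r}(x_0))}{r^{N-p}}\right)^{\frac{1}{p-1}}+C\inf_{B_r(x_0)}u.
\]
Iterating along dyadic radii $r_j=2^{-j}\cdot 2\,{\rm diam}(\Omega)$ and observing that $\inf_{B_{r_j}(x_0)}u\to 0$ once $r_j$ is large enough that $B_{r_j}(x_0)$ escapes $\Omega$ (by the zero extension) telescopes to $u(x_0)\le K_1 W_{1,p}^{2\,{\rm diam}(\Omega)}[\mu](x_0)$.

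For the lower bound, fix $x_0\in\Omega$ and $r\le d(x_0,\partial\Omega)/3$, so that $B_{3r}(x_0)\subset\Omega$. Let $h$ be the $p$-harmonic replacement of $u$ on $B_{3r}(x_0)$; then $w=u-h\ge 0$ is $p$-superharmonic on $B_{3r}(x_0)$ with the same measure $\mu|_{B_{3r}(x_0)}$. Comparing $w$ with the nonlinear Green function on $B_{3r}(x_0)$ whose singularity scales as $t^{-(N-p)/(p-1)}$ (logarithmic at $p=N$) gives
\[
u(x_0)\ge w(x_0)\ge c\left(\frac{\mu(B_r(x_0))}{r^{N-p}}\right)^{\frac{1}{p-1}}.
\]
Summing over the dyadic sequence $r_j=2^{-j}\cdot d(x_0,\partial\Omega)/3$ yields the desired $K_1^{-1} W_{1,p}^{d(x_0,\partial\Omega)/3}[\mu](x_0)\le u(x_0)$.

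The hardest step is the upper-bound iteration at the borderline $p=N$, because renormalized solutions are in general not in $W^{1,p}(\Omega)$ and one must work throughout with the truncations $T_k(u)\in W_0^{1,p}(\Omega)$ and the generalized gradient. The singular concentration measures $\lambda_k^{+}$ and $\lambda_k^{-}$ from the definition of a renormalized solution must be arranged so that they contribute with the right sign to the Caccioppoli inequality, and one needs the iteration constants to depend only on $N$ and $p$. Phuc--Verbitsky's slight extension of Kilpel\"ainen--Mal\'y is precisely that this bookkeeping carries through up to the boundary for arbitrary nonnegative finite measures $\mu\in\mathfrak{M}_{b}^{+}(\Omega)$.
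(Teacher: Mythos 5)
The paper does not prove this statement; it is quoted verbatim as Theorem~2.1 of Phuc--Verbitsky \cite{Phuc-Verbitsky1}, which in turn extends Theorem~1.6 of Kilpel\"ainen--Mal\'y \cite{Kilp and Maly Acta}. So there is no in-paper argument to compare against, and your proposal should be judged on its own merits as a sketch of the cited result.

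Two of your steps do not work as stated. In the lower-bound argument you let $h$ be the $p$-harmonic replacement of $u$ on $B_{3r}(x_0)$ and assert that $w=u-h$ is $p$-superharmonic with the same measure. This is only true when $p=2$: the $p$-Laplacian is nonlinear, so $-\Delta_p(u-h)$ bears no simple relation to $-\Delta_p u$ and $-\Delta_p h$, and $w$ need not be $p$-superharmonic at all. The genuine Kilpel\"ainen--Mal\'y lower bound is obtained by comparing $u$ directly against explicit $p$-superharmonic barriers built from the fundamental solution on annuli, not by a linear decomposition. Second, even granting a single-scale bound $u(x_0)\ge c\,(\mu(B_r(x_0))/r^{N-p})^{1/(p-1)}$ for every admissible $r$, summing over dyadic $r_j$ does not yield the Wolff potential; it yields only the supremum of the integrand over scales. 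The Wolff potential is the \emph{sum} (integral) over scales, and extracting it requires the full Kilpel\"ainen--Mal\'y induction in which the contribution from each dyadic shell is accumulated. A symmetric remark applies to your upper-bound telescoping: the one-step estimate $u(x_0)\le C(\mu(B_{2r})/r^{N-p})^{1/(p-1)}+C\inf_{B_r}u$ does not naively telescope into $W_{1,p}^{2\,\mathrm{diam}(\Omega)}[\mu](x_0)$, and in any case your $r_j=2^{-j}\cdot 2\,\mathrm{diam}(\Omega)$ is decreasing, so the balls never ``escape'' $\Omega$ as you claim; the boundary scale $2\,\mathrm{diam}(\Omega)$ is the \emph{largest} scale in the iteration, not a limit reached by shrinking. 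The remaining bookkeeping you flag (the $\lambda_k^\pm$ terms, the zero extension and the up-to-the-boundary scale $2\,\mathrm{diam}(\Omega)$) is indeed the content of the Phuc--Verbitsky refinement, but it rests on carrying the corrected barrier/iteration machinery through, not on the decomposition you describe.
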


The following result is also due to \cite{Phuc-Verbitsky1}. 

\begin{thm}\label{p laplace existence and comparison}(Lemma 6.9
of \cite{Phuc-Verbitsky1})

Let $\mu,\nu\in\mathfrak{M}_{b}^{+}(\Omega)$ and $\mu(D)\ge\nu(D)$
for all $D\subset\Omega$. Suppose that $u$ is a renormalized solution
of 
\[
\begin{cases}
-\Delta_{p}u & =\mu\,\,{\rm in}\,\,\Omega,\\
u & =0\,\,{\rm on}\,\,\partial\Omega.
\end{cases}
\]
Then there exists $v\ge u$ such that 

\[
\begin{cases}
-\Delta_{p}v & =\nu\,\,{\rm in}\,\,\Omega,\\
v & =0\,\,{\rm on}\,\,\partial\Omega,
\end{cases}
\]
in the renormalized sense.

\end{thm}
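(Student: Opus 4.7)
The plan is a regularization-plus-stability argument in the spirit of \cite{Phuc-Verbitsky1}. I would proceed in three main steps.

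First, approximate both measures by smooth bounded nonnegative data. Fix a standard nonnegative mollifier $\rho_n$ with $\int \rho_n = 1$, extend $\mu$ and $\nu$ by zero outside $\Omega$, and set $\mu_n = \rho_n \ast \mu$ and $\nu_n = \rho_n \ast \nu$. Each is smooth, nonnegative, and bounded, and converges narrowly to the respective limit. Since convolution with a nonnegative kernel is positivity preserving, the setwise domination $\mu(D) \ge \nu(D)$ for all Borel $D$ (equivalently $\mu \ge \nu$ as measures) passes to the pointwise bound $\mu_n \ge \nu_n$ for every $n$.

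Second, solve the regularized Dirichlet problems. For $L^\infty$ data the $p$-Laplace Dirichlet problem admits a unique weak solution in $W_0^{1,p}(\Omega) \cap C(\overline\Omega)$; call these $u_n$ and $v_n$. The classical weak comparison principle for the $p$-Laplacian with ordered smooth right-hand sides gives a pointwise comparison of $u_n$ and $v_n$ that matches the ordering of the data on the entirety of $\Omega$.

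Third, pass to the limit through Theorem \ref{Weak convergence p-Laplace}. Applied with $F_n = \mu_n$ (resp.\ $\nu_n$), $g_n \equiv 0$, $\rho_n = \eta_n \equiv 0$, the hypotheses are met: the $\mu_n$ are equi-integrable and converge narrowly (hence $L^1$-weakly against continuous test functions), the $g_n$ trivially converge strongly in $L^{p'}$, and no singular part enters the approximation, while the singular parts $\mu_s, \nu_s$ of the limit measures are absorbed into the narrow limit. Along a common subsequence, $u_n$ and $v_n$ converge a.e.\ to renormalized solutions $u^\ast, v$ of the Dirichlet problems with data $\mu$ and $\nu$, and the pointwise inequality between $u_n$ and $v_n$ survives the a.e.\ passage to the limit.

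The main obstacle is identifying $u^\ast$ with the \emph{prescribed} $u$, since renormalized solutions for general measure data are not known to be unique and the limit delivered by the stability theorem a priori depends on the chosen approximation. I would handle this by tailoring the mollification to the decomposition of $\mu$ associated with $u$ through the renormalized formulation: write $\mu = \mu_0 + \mu_s$ with $\mu_0 = F - \mathrm{div}\, g$, mollify $F$ in $L^1$, $g$ in $L^{p'}$, and $\mu_s$ narrowly, and coordinate the defect measures $\lambda_k^\pm$ on $\{|u| = k\}$ so that the approximating renormalized data matches those attached to $u$. Under this adapted approximation, the full strength of Theorem \ref{Weak convergence p-Laplace} forces $u^\ast = u$, and the parallel construction on the $\nu$-side yields the companion renormalized solution $v$ with the asserted comparison to $u$.
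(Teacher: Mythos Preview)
The paper does not give its own proof of this statement; it is simply quoted as Lemma~6.9 of \cite{Phuc-Verbitsky1}. So there is nothing to compare against in the paper itself, only the reference. (As an aside, the inequality in the hypothesis is stated backwards here: for the conclusion $v\ge u$ to hold, and for the application in the proof of Theorem~\ref{Main Thm p-Laplace} to go through, one needs $\nu\ge\mu$, not $\mu\ge\nu$. Your hedged phrase ``matches the ordering of the data'' suggests you noticed this.)

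Your overall three-step scheme---regularize, compare at the smooth level, pass to the limit via stability---is the natural one. But the fix you propose for the non-uniqueness obstruction does not work, and this is a genuine gap. Theorem~\ref{Weak convergence p-Laplace} only asserts that \emph{some} subsequence of the $u_n$ converges to \emph{some} renormalized solution of the limit problem; it provides no mechanism for selecting which one. Your suggestion to ``coordinate the defect measures $\lambda_k^\pm$ on $\{|u|=k\}$ so that the approximating renormalized data matches those attached to $u$'' does not make sense as stated: the measures $\lambda_k^\pm$ are outputs of the renormalized formulation for $u$, not inputs you can feed into an approximation scheme, and nothing in the stability theorem lets you force convergence to a prescribed limit by tailoring the decomposition of the data alone. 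Mollifying $F$, $g$ and $\mu_s$ separately is the correct way to satisfy the hypotheses of Theorem~\ref{Weak convergence p-Laplace}, but it does not resolve the identification $u^\ast=u$.

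The way out, and the device actually used in \cite{Phuc-Verbitsky1}, is to let the given solution $u$ generate its own approximation rather than approximating $\mu$ abstractly. Since $\mu\ge 0$ one has $u\ge 0$, hence $\lambda_k^-=0$, and the truncations $T_k(u)\in W_0^{1,p}(\Omega)$ are themselves weak solutions of $-\Delta_p T_k(u)=\mu_0\lfloor\{u<k\}+\lambda_k^+=:\hat\mu_k$, with $\hat\mu_k\to\mu$ narrowly and $T_k(u)\uparrow u$ pointwise \emph{by construction}. One then builds, for each $k$, a dominating datum $\hat\nu_k\ge\hat\mu_k$ approximating $\nu$, solves for $v_k\ge T_k(u)$ by the comparison principle for $W_0^{1,p}$ solutions, and passes to the limit on the $\nu$-side only via Theorem~\ref{Weak convergence p-Laplace}. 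The inequality $v\ge u$ then follows from $v_k\ge T_k(u)$ without ever needing to identify a limit on the $u$-side. Your scheme decouples the approximation of $u$ from $u$ itself, and that decoupling is exactly where the argument breaks.
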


Now we can prove our main theorem. 

\begin{proof}(of Theorem \ref{Main Thm p-Laplace}) We are going
to define a sequence of nonnegative renormalized solutions $\{u_{m}\}_{m\in\mathbb{N}}$
by 
\[
\begin{cases}
-\Delta_{N}u_{0} & =\mu,\,\,{\rm in}\,\,\Omega,\\
u_{0} & =0,\,\,{\rm on}\,\,\partial\Omega,
\end{cases}
\]
and 
\[
\begin{cases}
-\Delta_{N}u_{m+1} & =H_{l}(u_{m})+\mu,\,\,{\rm in}\,\,\Omega,\\
u_{m+1} & =0,\,\,{\rm on}\,\,\partial\Omega.
\end{cases}
\]

Choose $M$ as in Theorem \ref{Theorem u_m upper bound}, which depends
on $N,l,K=K_{1},\delta_{0},C_{1}$, where $K_{1}$ is the constant
which appears in Theorem \ref{p-Laplace estimate}. We assume $\mu$
is a nonnegative Radon measure with $\mu(\Omega)\le M$. 

By Theorem \ref{p-Laplace estimate}, as long as we can solve the
above equations and get $u_{m},\,\,m\ge0$, we will have estimates
(\ref{u0 bound})(\ref{u_=00007Bm+1=00007D bounded by u_m}) in Theorem
\ref{Theorem u_m upper bound}, with $K$ replaced by $K_{1}$. 

Then from (\ref{Hl(W) estimate})(\ref{u_m bound}), we know $H_{l}(u_{m})\in L^{1}$.
Then from Theorem \ref{p laplace existence and comparison}, we can
continue to solve the above equation and get $u_{m+1}$ and $u_{m+1}\ge u_{m}$. 

Since 
\[
u_{m}\le2KW_{1,N}^{R}[\bar{\mu}],m\in\mathbb{N},
\]
 $u_{m}$ converges, a.e. in $\Omega$, to some function $u$ with
\[
u\le2KW_{1,N}^{R}[\bar{\mu}],\forall x\in\Omega.
\]
From (\ref{Hl(W) estimate}) we know 
\[
H_{l}(u_{m})\to H_{l}(u),\,\,{\rm in}\,\,L^{1}(\Omega).
\]
Then by Theorem \ref{Weak convergence p-Laplace}, we know $u$ is
a renormalized solution of (\ref{Weak convergence p-Laplace}\ref{N-Laplace u equation}). 

\end{proof}

\section{$N/2$-Hessian equation with Dirichlet boundary condition}

Assume $\Omega\subset\mathbb{R}^{N}$ is a bounded domain with $C^{2}$
boundary. For $k=1,\cdots,N$ and $u\in C^{2}(\Omega)$ we define
\[
F_{k}[u]=S_{k}(\lambda(D^{2}u))
\]
where $\lambda(D^{2}u)=\lambda=(\lambda_{1},\lambda_{2},\cdots,\lambda_{N})$
deontes the eigenvalues of the Hessian matrix of $D^{2}u$ and $S_{k}$
is the $k$th elementary symmetric polynomial that is
\[
S_{k}(\lambda)=\sum_{1\le i_{1}<\cdots<i_{k}\le N}\lambda_{i_{1}}\cdots\lambda_{i_{k}}.
\]
 We assume that $\partial\Omega$ is uniformly $(k-1)$-convex. 

\begin{Def}\label{k-subharmonic}

An upper-semicontinuous function $u:\Omega\to[-\infty,\infty)$ is
$k$-convex (or $k$-subharmonic) if, for any $D\Subset\Omega$ for
every function $v\in C^{2}(D)\cap C(\bar{D})$ satisfying $F_{k}[v]\le0$
in $\bar{D}$, the following implication holds
\[
u\le v\,\,{\rm on}\,\,\partial D\Longrightarrow u\le v\,\,{\rm in}\,\,D.
\]
We denote by $\Phi^{k}(\Omega)$ the set of all $k$-subharmonic functions
in $\Omega$ which are not identically equal to $-\infty$. 

\end{Def}

The following weak convergence result for $k$-Hessian operators proved
in \cite{Trudinger-Wang2} is of fundamental importance in our study.

\begin{thm}\label{k-hessian weak convergence }

Let $\Omega$ be a bounded $(k-1)$-convex domain of $\mathbb{R}^{N}$.
For each $u\in\Phi^{k}(\Omega)$, there exists a nonnegative Radon
measure $\mu_{k}[u]$ in $\Omega$ such that 
\begin{enumerate}
\item $\mu_{k}[u]=F_{k}[u]$ for $u\in C^{2}(\Omega)$;
\item If $\{u_{n}\}$ is a sequence of $k$-convex functions which converges
a.e. to $u$, then $\mu_{k}[u_{n}]\to\mu_{k}[u]$ in the weak sense
of measures. 
\end{enumerate}
\end{thm}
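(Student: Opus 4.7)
The plan is to construct the measure $\mu_{k}[u]$ by approximation and then verify the two stated properties, following the strategy of Trudinger--Wang built around the divergence-free Newton tensor.

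First, I would approximate $u\in\Phi^{k}(\Omega)$ by smooth $k$-convex functions $u^{(\epsilon)}$ on subdomains. Because naive mollification need not preserve $k$-subharmonicity when $u$ is only upper semicontinuous, I would use either sup-convolutions of the form $u^{\epsilon}(x)=\sup_{y}\{u(y)-|x-y|^{2}/\epsilon\}$ (which preserve $k$-convexity and can then be smoothed by standard mollification), or Perron-type solutions of the homogeneous Dirichlet problem $F_{k}[v]=0$ on small balls with boundary data taken from a mollification of $u$. Either route produces $u^{(\epsilon)}\in\Phi^{k}\cap C^{2}$ with $u^{(\epsilon)}\to u$ almost everywhere and monotonically on compact subsets, which is the right starting point for the weak-convergence machinery.

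Second, I would establish that the classical densities $F_{k}[u^{(\epsilon)}]\,dx$ are uniformly locally bounded as measures. The key tool is the Newton tensor $T_{k-1}^{ij}[u]=\partial S_{k}/\partial u_{ij}$, which satisfies the divergence-free identity $\partial_{j}T_{k-1}^{ij}[u]=0$ whenever $u\in C^{3}$. For $\eta\in C_{c}^{\infty}(\Omega)$ nonnegative, integration by parts gives
\[
k\int F_{k}[u^{(\epsilon)}]\,\eta\,dx=\int T_{k-1}^{ij}[u^{(\epsilon)}]\, u^{(\epsilon)}_{ij}\,\eta\,dx=-\int T_{k-1}^{ij}[u^{(\epsilon)}]\, u^{(\epsilon)}_{i}\,\eta_{j}\,dx,
\]
and repeated integration by parts reduces the right-hand side to integrals against lower-order symmetric functions of $D^{2}u^{(\epsilon)}$ paired with $u^{(\epsilon)}$ and derivatives of $\eta$. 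Combined with the maximum-principle bound $\|u^{(\epsilon)}\|_{L^{\infty}_{\mathrm{loc}}}\le C$, an induction on $k$ closes the estimate. Banach--Alaoglu then extracts a subsequence converging weakly-$\ast$ to a nonnegative Radon measure, which I define to be $\mu_{k}[u]$.

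Finally, property (1) is immediate once one checks that for $u\in C^{2}$ any admissible approximation converges to $F_{k}[u]$ locally uniformly. For the weak continuity (2), one again uses the divergence identity and proceeds by induction on $k$. The base case $k=1$ is simply that $\Delta u_{n}\to\Delta u$ in $\mathcal{D}'$ whenever $u_{n}\to u$ in $L^{1}_{\mathrm{loc}}$. For the inductive step the crux is passing to the limit in the nonlinear coefficient $T_{k-1}^{ij}[u_{n}]$ paired against the gradient $(u_{n})_{i}$: this requires local Sobolev-type regularity for $k$-subharmonic functions so that $u_{n}\to u$ strongly in a suitable $W^{1,q}$ space, combined with the inductive weak convergence of $\mu_{k-1}[u_{n}]$. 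Uniqueness of $\mu_{k}[u]$, i.e.\ independence of the particular approximation, follows as an automatic corollary. The main obstacle is precisely this inductive step for weak continuity, which marries the algebraic miracle of the divergence-free Newton tensor with nontrivial a priori gradient estimates for $k$-subharmonic functions; the latter is the technical heart of the Trudinger--Wang theory.
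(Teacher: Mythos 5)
The paper does not prove this theorem; it is quoted as Theorem~\ref{k-hessian weak convergence } directly from Trudinger and Wang's ``Hessian measures II'' (\cite{Trudinger-Wang2}), so there is no in-paper proof to compare against. Your outline nonetheless reconstructs the correct Trudinger--Wang strategy at the structural level: approximate $u\in\Phi^{k}(\Omega)$ by smooth $k$-convex functions, exploit the divergence-free Newton tensor $T_{k-1}^{ij}[u]=\partial S_{k}/\partial u_{ij}$ to derive integration-by-parts identities, obtain uniform local mass bounds on $F_{k}[u^{(\epsilon)}]\,dx$, extract a weak-$\ast$ limit, and prove weak continuity by an induction on $k$ that couples the algebraic structure with a priori gradient estimates. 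Those are indeed the right pillars.

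That said, as a proof rather than a roadmap, the outline leaves the genuinely hard parts unaddressed. The phrase ``repeated integration by parts reduces the right-hand side to integrals against lower-order symmetric functions\dots and an induction on $k$ closes the estimate'' is where the entire difficulty lives: the mixed terms $T_{k-1}^{ij}[u]u_{i}$ that arise are only controllable once one has the local gradient estimate $u\in W^{1,q}_{\mathrm{loc}}(\Omega)$ for $q<\frac{Nk}{N-k}$ for $k$-subharmonic $u$, and establishing that estimate is the principal content of the Hessian measures papers, not a lemma one can invoke casually. Similarly, the inductive step for weak continuity requires more than $W^{1,q}$ convergence of $u_{n}$; it needs uniform local $L^{\infty}$ bounds and a careful decomposition of the nonlinear coefficient to pass to the limit in $T_{k-1}^{ij}[u_{n}](u_{n})_{i}$, and the claimed independence of $\mu_{k}[u]$ from the approximating sequence is established by a separate comparison-principle argument, not ``as an automatic corollary.'' In short, the skeleton is right, but the estimates you defer to are precisely the theorem's substance.
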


Also we need the following results proved in \cite{Phuc-Verbitsky1}.

\begin{thm}\label{k Hessian Bdd by wolff potential}

Let $\mu$ be a nonnegative finite Radon measure such that 
\[
\mu=\mu'+f
\]
where $\mu'$ is a nonnegative Radon measure compactly supported in
$\Omega,$ and $f\ge0,f\in L^{s}(\Omega)$ with $s>\frac{N}{2k}$,
if $1\le k\le\frac{N}{2}$, and $s=1$ if $\frac{N}{2}<k\le N$. Suppose
that $-u$ is a nonpositive $k$-subharmonic function in $\Omega$,
that is continuous near $\partial\Omega$ and solves the equation
\[
\begin{cases}
F_{k}[-u] & =\mu\,\,{\rm in}\,\,\Omega,\\
u & =0\,\,{\rm on}\,\,\partial\Omega.
\end{cases}
\]
Then there is a constant $K=K(N,k)>0$ such that
\[
u(x)\le KW_{\frac{2k}{k+1},k+1}^{2{\rm diam}(\Omega)}\mu
\]
for every $x\in\Omega$. 

\end{thm}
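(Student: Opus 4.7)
The plan is to follow Labutin's local comparison strategy, adapted to the Dirichlet setting, in which a one-scale pointwise bound is iterated on dyadic balls and the Wolff potential emerges as a geometric sum of local oscillations.

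First, I would split off the regular part of the data. Writing $\mu=\mu'+f$ with $f\in L^s(\Omega)$ for $s>N/(2k)$ (or $s\ge1$ if $k>N/2$), the classical $L^\infty$ estimate of Trudinger--Wang bounds the contribution of $f$ to $u$ by $C\|f\|_{L^s}$, which is in turn dominated by $C\,W^{2\mathrm{diam}(\Omega)}_{2k/(k+1),\,k+1}[f]$ via a direct Hölder computation. It therefore suffices to prove the inequality assuming $\mu$ is compactly supported in $\Omega$, which is the genuinely singular case.

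Second, I would establish the basic one-scale estimate. For $x\in\Omega$ and $r>0$ with $B_{2r}(x)\subset\Omega$, consider the radial $k$-admissible solution $w_r$ of
\[
F_k[-w_r]=\chi_{B_r(x)}\mu\quad\text{in }B_{2r}(x),\qquad w_r=0\quad\text{on }\partial B_{2r}(x).
\]
A direct integration of the ODE for $k$-Hessian on radial functions yields
\[
w_r(x)\le C\Bigl(\tfrac{\mu(B_r(x))}{r^{\,N-2k}}\Bigr)^{1/k}.
\]
The comparison principle for $k$-subharmonic functions (applied to $-u$ and $-(w_r+\sup_{\partial B_{2r}(x)}u)$, both of which are $k$-admissible) then yields the one-scale pointwise bound
\[
u(x)\le C\Bigl(\tfrac{\mu(B_{2r}(x))}{r^{\,N-2k}}\Bigr)^{1/k}+\sup_{\partial B_{2r}(x)}u.
\]

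Third, I would iterate this estimate on a dyadic sequence of radii $r_j=2^{-j}R_0$ with $R_0\asymp\mathrm{diam}(\Omega)$. Because $u=0$ on $\partial\Omega$ and $u$ is continuous up to the boundary, the supremum terms telescope and one absorbs them using that $\sup_{\partial B_{2r_{j+1}}(x)}u$ reappears on the left at the next scale. Summing the dyadic contributions and recognising the resulting sum as the Riemann sum for the truncated Wolff potential integral
\[
\int_0^{2\mathrm{diam}(\Omega)}\Bigl(\tfrac{\mu(B_t(x))}{t^{N-2k}}\Bigr)^{1/k}\frac{dt}{t}
\]
gives the claimed estimate, with an explicit constant $K=K(N,k)$ from the geometric series.

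The hard step will be the one-scale comparison: unlike the linear case, the $k$-Hessian maximum principle requires that both competitors be $k$-admissible, so the radial model $w_r$ must be chosen so that $-w_r\in\Phi^k(B_{2r}(x))$ along the entire profile, which forces a careful integration of the radial ODE in the admissible cone. A secondary subtlety is that, for the boundary term to actually vanish in the iteration, one needs the continuity of $u$ near $\partial\Omega$ combined with the uniform $(k-1)$-convexity of $\partial\Omega$ to produce a barrier; this is precisely where the hypotheses enter.
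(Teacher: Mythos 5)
The paper does not prove this theorem: it is stated as a quoted result from \cite{Phuc-Verbitsky1} (it corresponds to Theorem~3.1 there, which itself rests on Labutin's local potential estimate \cite{Labutin}). So there is no ``paper's own proof'' to compare against; I will instead assess your sketch on its own terms.

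Your core mechanism (a one-scale comparison with a radial model solution, iterated on dyadic balls so that the Wolff potential $\int_0^T(\mu(B_t(x))/t^{N-2k})^{1/k}\,dt/t$ arises as a geometric sum, with the exponents $\alpha=\tfrac{2k}{k+1}$, $s=k+1$ giving exactly $\alpha s=2k$ and $s-1=k$) is the right skeleton and is faithful to the Labutin--Phuc--Verbitsky line of argument. But your first step contains a genuine gap. You propose to ``split off'' the contribution of $f$, bound it in $L^\infty$ by $C\|f\|_{L^s}$, and then dominate that constant by $C\,W^{2\mathrm{diam}\,\Omega}_{2k/(k+1),k+1}[f](x)$. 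This fails on two counts. First, $F_k$ is fully nonlinear, so $u$ does \emph{not} decompose as a sum of solutions with data $\mu'$ and $f$; at best the concavity of $S_k^{1/k}$ on $\Gamma_k$ gives a one-sided comparison $u\le u_{\mu'}+u_f$, and even that requires separately constructing admissible solutions for $\mu'$ and $f$ with matching boundary data. Second, and more seriously, the claimed H\"older inequality goes the wrong way: H\"older gives $W[f](x)\le C\|f\|_{L^s}^{1/k}$ (this is exactly where $s>N/(2k)$ makes the $t$-integral converge), not $\|f\|_{L^s}\le C\,W[f](x)$. There is no pointwise lower bound of the Wolff potential by the $L^s$-norm with a constant depending only on $N,k$; if $f$ vanishes near $x$ and $\mu'$ is supported far away, $W[\mu](x)$ can be arbitrarily small relative to $\|f\|_{L^s}$. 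The decomposition $\mu=\mu'+f$ is not an invitation to split the estimate: it is a structural/regularity hypothesis on the data ensuring that a $k$-subharmonic solution continuous near $\partial\Omega$ exists and that the boundary term in the iteration actually vanishes. In the actual argument one applies Labutin's interior estimate to the \emph{whole} measure $\mu$ on balls $B_{2r}(x)\Subset\Omega$ and then handles the boundary layer separately, using the $L^s$ regularity of $f$ near $\partial\Omega$ together with a barrier coming from the uniform $(k-1)$-convexity of $\partial\Omega$ to control $\sup_{\partial B_{2r}}u$ when $B_{2r}$ is close to the boundary. Your own remark at the end of the proposal about needing a barrier is exactly where the work is hidden, but the ``split off the regular part'' step should be removed, not merely hedged.

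One smaller imprecision: $\chi_{B_r(x)}\mu$ is not radial, so there is no radial admissible $w_r$ solving $F_k[-w_r]=\chi_{B_r(x)}\mu$ in $B_{2r}(x)$. You should instead compare $u$ against the solution of $F_k[-w]=\mu\lfloor B_r(x)$ with zero boundary data on $B_{2r}(x)$ and bound $w$ on $\partial B_r(x)$ using the value of the radial solution of the one-dimensional $k$-Hessian ODE with total mass $\mu(B_r(x))$ placed at the centre; this is the device used in \cite{Labutin} and gives precisely $(\mu(B_r(x))/r^{N-2k})^{1/k}$ with a constant depending only on $N,k$.
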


\begin{thm}\label{k-hessian comparison}

Let $\Omega,\mu,\phi$ and $u$ be as in the above theorem. Suppose
$\nu$ is another measure with $\nu=\nu'+g$, where $\nu'$ is a nonneative
measure compactly supported in $\Omega$, and $g\ge0,g\in L^{s}(\Omega)$
with $s>\frac{N}{2k},$ if $1\le k\le\frac{N}{2}$ and $s=1$ if $\frac{N}{2}<k\le N$.
Then there exists a function $\omega$ such that $-\omega\in\Phi^{k}(\Omega)$,
$\omega\ge u$ and 
\[
\begin{cases}
F_{k}[-\omega] & =\mu+\nu,\,\,{\rm in}\,\,\Omega\\
\omega & =0,\,\,{\rm on}\,\,\partial\Omega.
\end{cases}
\]

\end{thm}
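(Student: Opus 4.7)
The plan is to build $\omega$ by approximating $\nu$ with smooth densities, solving the perturbed Dirichlet problem via Theorem \ref{k Hessian Bdd by wolff potential} at each stage, and passing to the limit using the Trudinger--Wang weak convergence theorem \ref{k-hessian weak convergence }. The desired monotonicity $\omega \ge u$ will be obtained by exploiting the pointwise inequality $\mu + \nu_\varepsilon \ge \mu$ between the driving measures, via a comparison principle for $\Phi^{k}(\Omega)$.

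Concretely, let $\rho_\varepsilon$ be a standard mollifier. Since ${\rm supp}(\nu')\Subset \Omega$, the convolution $\nu'_\varepsilon := \rho_\varepsilon\ast \nu'$ is smooth, nonnegative, and compactly supported in $\Omega$ for all sufficiently small $\varepsilon$, with $\nu'_\varepsilon \to \nu'$ in the narrow topology. Approximate $g$ by $g_\varepsilon \in C_c^\infty(\Omega)$ with $g_\varepsilon \ge 0$ and $g_\varepsilon \to g$ in $L^s(\Omega)$, and set $\nu_\varepsilon := \nu'_\varepsilon + g_\varepsilon$. Then $\nu_\varepsilon \rightharpoonup \nu$ weakly as measures, and $\mu + \nu_\varepsilon$ still satisfies the decomposition hypothesis of Theorem \ref{k Hessian Bdd by wolff potential}. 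Applying that theorem yields $\omega_\varepsilon$ with $-\omega_\varepsilon \in \Phi^{k}(\Omega)$, continuous near $\partial\Omega$, $\omega_\varepsilon = 0$ on $\partial\Omega$, solving $F_k[-\omega_\varepsilon] = \mu + \nu_\varepsilon$, and satisfying the Wolff bound
\[
\omega_\varepsilon(x) \le K\, W_{\frac{2k}{k+1},k+1}^{2{\rm diam}(\Omega)}[\mu+\nu_\varepsilon](x),
\]
which is locally uniformly bounded in $\varepsilon$.

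Next I would establish $\omega_\varepsilon \ge u$ in $\Omega$. Regularizing $\mu$ as well by a parallel mollification yields a two-parameter family of smooth-data solutions on which the classical $k$-Hessian comparison principle applies directly (the larger right-hand side forcing the larger $k$-subharmonic solution); undoing the $\mu$-regularization via Theorem \ref{k-hessian weak convergence } preserves the inequality, giving $\omega_\varepsilon \ge u$. Using then the standard compactness of norm-bounded families in $\Phi^{k}(\Omega)$ in $L^{1}_{{\rm loc}}(\Omega)$, extract a subsequence $\omega_\varepsilon \to \omega$ a.e.\ in $\Omega$ with $-\omega \in \Phi^{k}(\Omega)$. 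Theorem \ref{k-hessian weak convergence } delivers $\mu_k[-\omega_\varepsilon] \rightharpoonup \mu_k[-\omega]$, and since $\mu + \nu_\varepsilon \rightharpoonup \mu + \nu$, the identification $F_k[-\omega] = \mu + \nu$ follows. The boundary condition $\omega = 0$ on $\partial\Omega$ and continuity near $\partial\Omega$ are inherited from uniform barriers for $\omega_\varepsilon$ (as constructed in Phuc--Verbitsky), while $\omega \ge u$ passes to the a.e.\ limit.

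The main obstacle is the comparison step: for measure data one cannot apply a naive comparison principle directly to $\omega_\varepsilon$ and $u$, and the argument must weave together smooth-data comparison with the weak continuity of the Hessian measure along $k$-subharmonic sequences. Once this monotonicity is in hand, the uniform Wolff potential bound and Theorem \ref{k-hessian weak convergence } make the remaining limit passage essentially mechanical.
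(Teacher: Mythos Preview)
The paper does not prove this theorem; it is quoted verbatim from Phuc--Verbitsky \cite{Phuc-Verbitsky1} and used as a black box in the iteration argument for Theorem~\ref{Main theorem k hessian}. So there is no ``paper's own proof'' to compare against; your outline is in spirit the approximation-plus-weak-continuity route that Phuc--Verbitsky themselves use.

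That said, your write-up has a genuine gap. You invoke Theorem~\ref{k Hessian Bdd by wolff potential} to \emph{produce} the approximate solutions $\omega_\varepsilon$, but that theorem is purely an a~priori estimate: it assumes a solution already exists and bounds it by the Wolff potential. It does not assert existence. For the regularized data $\mu+\nu_\varepsilon$ you must appeal separately to an existence result (Trudinger--Wang \cite{Trudinger-Wang2,Trudinger-Wang3} for $L^s$ data, or the construction in \cite{Phuc-Verbitsky1} that builds a solution for compactly supported measure plus $L^s$ data); only then does Theorem~\ref{k Hessian Bdd by wolff potential} give the uniform Wolff bound you need for compactness.

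A second point: your plan to obtain $\omega_\varepsilon\ge u$ by ``regularizing $\mu$ as well'' and undoing that regularization via Theorem~\ref{k-hessian weak convergence } is circular as stated. Weak convergence of Hessian measures does not by itself guarantee that the approximate solutions with smoothed $\mu$ converge back to the \emph{given} $u$; in general the Dirichlet problem with measure data need not have a unique solution in $\Phi^k(\Omega)$. The way Phuc--Verbitsky handle this is to build $u$ itself as a limit of smooth-data solutions $u_j$ in the first place (this is part of the content of Theorem~\ref{k Hessian Bdd by wolff potential} in their paper), so that the classical comparison $u_j\le \omega_{j,\varepsilon}$ is available at each stage and survives both limits. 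If you take $u$ as an arbitrary solution with data $\mu$, the monotonicity step does not go through without an additional argument.
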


In the end we prove Theorem \ref{Main theorem k hessian}. In fact
the proof is similar to the proof of Theorem \ref{Main Thm p-Laplace}
except for that we replace Theorem \ref{Weak convergence p-Laplace},
\ref{p-Laplace estimate}, \ref{p laplace existence and comparison}
by Theorem \ref{k-hessian weak convergence }, \ref{k Hessian Bdd by wolff potential},
\ref{k-hessian comparison}. One difference is that we use $W_{\frac{2N}{N+2},\frac{N+2}{2}}^{D}[\mu]$
instead of $W_{1,N}^{D}[\mu]$. Another difference is that when we
apply Theorem \ref{k Hessian Bdd by wolff potential}, \ref{k-hessian comparison},
the measure $\mu$ needs to be sufficiently regular near the boundary.
Notice that from Theorem \ref{Theorem u_m upper bound}, we have $H_{l}(u_{m})\in L^{s}(\Omega)$
in the iteration procedure.

\end{document}